\documentclass[letterpaper, 12pt]{amsart}

\usepackage{tikz}

\usetikzlibrary{shadings}

\usepackage{amscd}

\usepackage{amsmath}

\usepackage{amsfonts}

\usepackage{amssymb}

\usepackage{amsthm}

\usepackage[utf8]{inputenc}
\usepackage[T1]{fontenc}
\usepackage[charter]{mathdesign} 
\usepackage{bbold}

\usepackage{mathtools}

\usepackage{arydshln}

\usepackage{fancyhdr}

\usepackage{verbatim}

\usepackage[all]{xy}

\usepackage{enumitem}

\usepackage{color}

\usepackage[colorlinks=true, linkcolor=teal, citecolor=-teal,
pagebackref=true]{hyperref}

\usepackage[reftex]{theoremref}

\usepackage{nameref}

\usepackage[roundcorner=5pt,  skipabove=10pt, skipbelow=10pt]{mdframed}

\usepackage[colorinlistoftodos]{todonotes}

\theoremstyle{plain}
\newtheorem*{theorem*}{Theorem}
\newtheorem{theorem}{Theorem}[section]
\newtheorem{lemma}[theorem]{Lemma}
\newtheorem{corollary}[theorem]{Corollary}
\newtheorem{proposition}[theorem]{Proposition}

\newtheorem*{claim*}{Claim}

\theoremstyle{definition}
\newtheorem{problem}[theorem]{Problem}
\newtheorem*{example*}{Example}

\theoremstyle{remark}
\newtheorem*{remark*}{Remark}

\newcommand{\ba}{\mathbf a}

\newcommand{\bh}{\mathbf h}

\newcommand{\bw}{\mathbf w}

\newcommand{\bv}{\mathbf v}

\newcommand{\bn}{\mathbf n}

\newcommand{\bp}{\mathbf p}

\newcommand{\bx}{\mathbf x}

\newcommand{\by}{\mathbf y}

\newcommand{\CC}{\mathbb{C}}

\newcommand{\PP}{\mathbb{P}}

\newcommand{\RR}{\mathbb{R}}

\newcommand{\QQ}{\mathbb{Q}}

\newcommand{\NN}{\mathbb{N}}

\newcommand{\TT}{\mathbb{T}}

\newcommand{\ZZ}{\mathbb{Z}}

\newcommand{\calI}{\mathcal{I}}

\newcommand{\calJ}{\mathcal{J}}

\newcommand{\calP}{\mathcal{P}}

\newcommand{\bone}{\mathbb{1}}

\newcommand{\bzero}{\mathbf{0}}

\renewcommand{\leq}{\leqslant}

\renewcommand{\geq}{\geqslant}

\renewcommand{\subset}{\subseteq}

\DeclarePairedDelimiter{\abs}{\lvert}{\rvert}
\DeclarePairedDelimiter{\inner}{\langle}{\rangle}
\DeclarePairedDelimiter{\norm}{\lVert}{\rVert}
\DeclarePairedDelimiter{\floor}{\lfloor}{\rfloor}
\DeclarePairedDelimiter{\set}{\lbrace}{\rbrace}
\DeclarePairedDelimiter{\parens}{\lparen}{\rparen}
\DeclarePairedDelimiter{\brackets}{\lbrack}{\rbrack}

\DeclareMathOperator{\lcm}{lcm}
\DeclareMathOperator{\lex}{lex}
\DeclareMathOperator{\supp}{supp}
\DeclareMathOperator{\Span}{span}

\def\eps{{\varepsilon}}

\def\1int{{[0,1]}}

\makeatletter
\let\@wraptoccontribs\wraptoccontribs
\makeatother

\title[Duffin--Schaeffer examples, real residue systems, and Bohr-set
primes]{Duffin--Schaeffer examples, real residue systems, and Bohr-set
  primes}

\author{Stefan M.~Hesseling \and Felipe A.~Ram{\'i}rez}

\contrib[with an appendix by]{Manuel Hauke}

\address{Department of Mathematics and Computer Science, Wesleyan
  University, Connecticut, USA}

\email{mhesseling@wesleyan.edu}

\email{framirez@wesleyan.edu}

\date{}

\subjclass[2020]{11J83, 11K60, 11N05, 11N13, 11A07}

\keywords{Diophantine approximation, inhomogeneous approximation,
  residue systems, Bohr sets, primes}

\begin{document}

\begin{abstract}
  We prove the following generalization of a well-known result of
  Duffin and Schaeffer: For any given countable sets $Y \subset\RR$
  and $Z\subset\RR\setminus\Span_\QQ(\set{1}\cup Y)$, there exist
  functions $\psi$ such that the set of inhomogeneously
  $\psi$-approximable numbers has zero measure or full measure,
  according as the inhomogeneous parameter lies in $Y$ or $Z$. The
  proof uses an analogue of residue systems where the residues can
  take arbitrary real values, and it also requires information about
  the distribution of primes lying in Bohr sets. We extend a theorem
  of Rogers to the more general real residues setting, and we extend
  Siegel--Walfisz's prime number theorem for arithmetic progressions
  to the setting of prime numbers lying in Bohr sets. We also prove
  that circle rotations equidistribute when sampled along such primes,
  provided the rotation angle is rationally independent of the Bohr
  set parameter, generalizing a theorem of Vinogradov.

  An appendix by Manuel Hauke answers a combinatorial question that is
  posed in the introduction.
\end{abstract}

\maketitle
  

\section{Introduction}

\subsection{Inhomogeneous Duffin--Schaeffer-type examples}

For a real number $y\in\RR$ and a function $\psi:\NN\to\RR_{\geq 0}$,
we study
\begin{equation*}
  W(\psi,y) = \set*{x\in[0,1] : \norm{qx - y} < \psi(q)\textrm{ infinitely often}},
\end{equation*}
the set of inhomogeneously $\psi$-approximable numbers with
inhomogeneous parameter $y$, where $\norm{\cdot}$ denotes distance to
$\ZZ$.

It is a classical theorem of Sz{\"u}sz~\cite[1958]{SzuszinhomKT} that
the Lebesgue measure $\mu(W(\psi, y))$ is $0$ if the series
$\sum\psi(q)$ converges, and $1$ if it diverges and $\psi$ is
nonincreasing. This result is usually referred to as the inhomogeneous
Khintchine theorem, since it generalizes Khintchine's
theorem~\cite[1924]{Khintchine} to the $y\neq 0$ case. Duffin and
Schaeffer~\cite[1941]{duffinschaeffer} showed by example that the
monotonicity condition cannot be omitted from Khintchine's theorem,
and the second author~\cite[2017]{ds_counterex} showed that it cannot
be omitted from Sz{\"u}sz's inhomogeneous result.

More specifically,~\cite[Theorem~1]{ds_counterex} showed that for any
countable set $Y\subset\RR$, one can find a function
$\psi:\NN\to\RR_{\geq 0}$ such that $\sum\psi(q)$ diverges, while
$\mu(W(\psi, y))=0$ for every $y\in Y$. In
addition,~\cite[Theorem~2]{ds_counterex} showed---conditionally---that
if $Z\subset\RR\setminus\Span_\QQ(\set{1}\cup Y)$ is another countable set,
then one can find $\psi$ as before, and with the further property that
$\mu(W(\psi, z))=1$ for every $z\in Z$. The condition was a
conjectural dynamical statement~\cite[Conjecture~17]{ds_counterex}
which could be seen as analogous to Erd{\H o}s's famous covering
systems problems~\cite{Erdos2,Erdoscongruences} (to be discussed).

In this paper, we prove the following \emph{unconditional} version
of~\cite[Theorems~1 and~2]{ds_counterex}.

\begin{theorem}[Duffin--Schaeffer examples with prescribed
  behavior]\th\label{prescription}
  Let $Y,Z \subset\RR$ be countable sets such that
  $Z\subset\RR\setminus\Span_\QQ(\set{1}\cup Y)$. Then there exists
  $\psi:\NN\to \RR_{\geq 0}$ such that
  \begin{equation*}
    \mu(W(\psi, y)) = 0 \qquad \textrm{and} \qquad \mu(W(\psi, z)) = 1
  \end{equation*}
  for every $y\in Y$ and $z\in Z$.
\end{theorem}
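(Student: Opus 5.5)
We will build $\psi$ as a sum of countably many blocks, $\psi=\sum_k \psi_k$, with the $\psi_k$ supported on pairwise disjoint finite sets of integers. Enumerate $Y=\{y_1,y_2,\dots\}$ and $Z=\{z_1,z_2,\dots\}$. At stage $k$ we handle $y_1,\dots,y_k$ and $z_1,\dots,z_k$ simultaneously. Block $k$ is modelled on the Duffin--Schaeffer construction: choose a large squarefree $N_k=p_1\cdots p_m$ and set
\[
\psi_k(q)=\frac{\eps_k\,q}{N_k}\quad\text{for } q\mid N_k \text{ in a suitable range.}
\]
Then the intervals $\{x:\|qx-y\|<\psi_k(q)\}$ sit inside $\{x:\|N_kx - \tfrac{N_k}{q}y\|<\eps_k\}$, up to shifts by the residues $\tfrac{N_k}{q}y$ modulo $1$. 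The mass $\sum_{q\mid N_k}\psi_k(q)\approx \eps_k\prod(1+1/p_i)$ can be made of size $1$ while $\eps_k$ is tiny, provided $\sum 1/p_i$ is large.

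\textbf{The $y$-side: zero measure.} For $y=y_j$, the union over $q\mid N_k$ lies in at most $\eps_k$ times the number of distinct values of $\tfrac{N_k}{q}y_j \bmod 1$ per unit. The plan is to choose the primes $p_i$ inside a Bohr set $B=\{n:\|n y_i\|<\delta\ \forall i\le k\}$, intersected with $\{n\equiv 1 \bmod M\}$ to control the rational parts of the $y_i$. Then every $\tfrac{N_k}{q}y_j$ lies within $m\delta'$ of a bounded finite set of real residues. This is a real-residue-system version of Rogers' theorem: the union of the $\eps_k$-neighbourhoods has measure $\ll \eps_k\cdot C$, and choosing $\eps_k=2^{-k}/C$ makes the $y_j$-measure of block $k$ summable. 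Borel--Cantelli then gives $\mu(W(\psi,y_j))=0$, since only blocks $k\ge j$ matter and finitely many earlier blocks are irrelevant.

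\textbf{The $z$-side: full measure.} For $z=z_j$ with $z_j\notin\Span_\QQ(\{1\}\cup Y)$, we want block $k$ to cover a proportion $\ge c>0$ of $[0,1]$, uniformly in $k$, with quasi-independence across blocks. Then the Duffin--Schaeffer/Gallagher zero-one law for inhomogeneous sets (or the Chung--Erdős inequality plus a density argument) upgrades this to full measure. Covering requires the shifts $\tfrac{N_k}{q}z_j \bmod 1$ to be well spread, i.e.\ roughly equidistributed as $q$ ranges over divisors. This is where we need the distribution of primes in Bohr sets:
\begin{itemize}
\item a Siegel--Walfisz-type count, so that there are enough such primes with $\sum 1/p$ large;
\item a Vinogradov-type statement that $p z_j \bmod 1$ equidistributes along primes $p$ in the Bohr set, which holds because $z_j$ is rationally independent of $1,y_1,\dots,y_k$.
\end{itemize}
Products of such primes then produce spread-out shifts. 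Using this, we choose the primes so that the products $\tfrac{N_k}{q}z_j$ avoid collisions, and we show via a second-moment/overlap estimate that the union has measure $\gg \eps_k\cdot\#\{q\}$. We tune the range of $q$ so that this is $\asymp 1$, while on the $y$-side the collapse to few residues keeps the measure $O(\eps_k)$.

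\textbf{Main obstacle.} The hardest step is the $z$-side lower bound. One must show the shifted neighbourhoods $\{\|N_kx-\tfrac{N_k}{q}z\|<\eps_k\}$ for different $q$ overlap little, which amounts to controlling pair correlations of the numbers $\tfrac{N_k}{q}z$ modulo $1$ with primes restricted to a Bohr set. The approach is to first prove the equidistribution of $pz$ along Bohr-set primes with error terms, then pick the primes greedily and sequentially so that each new prime keeps the collection of shifts $\eps_k$-separated on average. Finally, stages must be made independent by taking $N_{k+1}$ huge compared with $N_k$, so that the blocks are quasi-independent and the limsup set has full measure.
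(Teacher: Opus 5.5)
\textbf{The $y$-side collapse fails for composite moduli.} You put mass on (essentially all) divisors $q\mid N_k$, so the moduli $N_k/q$ are products of several of your primes. You then assert that each $\tfrac{N_k}{q}y_j$ lies within $m\delta'$ of a fixed finite set. Bohr-set conditions are additive, not multiplicative. From $p_2y_j=n+\eta$ with $|\eta|<\delta$ you only get $p_1p_2y_j\equiv p_1\eta \pmod 1$, and $p_1\eta$ is arbitrary mod $1$ once $p_1|\eta|\geq 1$. Only the rational part, handled by your congruence $p\equiv 1 \bmod M$, survives taking products.

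This is not cosmetic. Once you tune $\eps_k\prod_i(1+1/p_i)\asymp 1$ for the $z$-side, most of that mass sits on $q$ with $N_k/q$ composite. Their residues $\tfrac{N_k}{q}y_j$ are uncontrolled, so there is no reason for the $y_j$-measure of the block to be $O(\eps_k)$; generically it is of order $1$. (Also, $\{n:\|ny_i\|<\delta\}\cap\{n\equiv 1 \bmod M\}$ can contain no primes at all, e.g.\ if $y_1=1/2$.)

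The missing idea is to put $\psi$ only on the codivisors $q=P/p$, with $\psi(P/p)=\eps/(2p)$ and $p$ ranging over primes in $N_{\by}(\by,\eps/2)=\{n:\|(n-1)\by\|<\eps/2\}$.
\begin{itemize}
\item This Bohr set contains $n=1$, so by the prime number theorem for Bohr sets it has infinitely many primes. Moreover $\sum 1/p=\infty$ even along those with $pz$ in a prescribed arc.
\item In the coordinate $Px$, each piece is the $\eps/2$-neighbourhood of $py+p\ZZ$, which lies inside the $\eps$-neighbourhood of $y+\ZZ$. So the whole union sits in $A_P^y(\eps)$, of measure $2\eps$.
\end{itemize}
Only single primes ever multiply $y$, which is why the additive Bohr condition suffices. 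Rogers's theorem is a lower bound and plays no role on this side.

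\textbf{The $z$-side is not yet an argument.} Each piece has measure $2\psi_k(q)$, so a bound $\gg\eps_k\#\{q\}$ is mis-scaled. The shifts also cannot be kept $\eps_k$-separated: at most about $1/\eps_k$ points of $\TT$ are mutually $\eps_k$-separated, while you need total mass $\eps\sum 1/p\gtrsim 1$. Heavy overlaps among primes with nearby $pz$ are therefore unavoidable and must be quantified.

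The paper does this as follows. It partitions $\TT$ into arcs $I_j$ of length $\eps$. Pieces from non-adjacent arcs are disjoint, since an overlap forces $\|(p-p')z\|<\eps$. Within one arc, Rogers's theorem for real residues compares the union with the homogeneous case, where the sieve gives $\eps(1-\prod(1-1/p))\geq 2\eps/3$. Summing over every other arc yields $1/3$.

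Your second-moment idea can be made to work in the prime-only setting as a genuine alternative that avoids Rogers. For $p\neq p'$ one computes exactly $\mu(A^z_{P/p}\cap A^z_{P/p'})=(\eps-\|(p-p')z\|)^+/(pp')$. Then $1/p$-weighted equidistribution of $pz$ along the Bohr-set primes (partial summation from the equidistribution theorem) makes the Chung--Erd\H{o}s ratio tend to $1$ as $\sum 1/p\to\infty$. You have not said any of this, however.

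Finally, if you combine blocks via Chung--Erd\H{o}s or Erd\H{o}s--R\'enyi, you need $\mu(U_i\cap U_j)\leq(1+\eta)\mu(U_i)\mu(U_j)$. That does not follow just from taking $N_{k+1}$ large. The paper replaces every $q$ in a block by $2^{m}q$, which pulls the block back under the doubling map, and uses mixing.
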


The proof is a construction. The functions $\psi$ will be supported in
blocks of integers that are products of elements of Bohr sets, that
is, (judiciously chosen) sets of the form
\begin{equation*}
  N_{\bv}(\by, \eps)  = \set{n \in\NN: \norm{n\by - \bv} < \eps},
\end{equation*}
where $\by,\bv\in \RR^\ell$ and $\eps>0$ and $\norm{\cdot}$ denotes
distance to $\ZZ^\ell$. Loosely speaking, the Bohr set condition leads
to small measure for $W(\psi,y)$, and the remaining challenge is to
show large measure for $W(\psi,z)$. This is achieved in part by
addressing the aforementioned obstacle related to covering systems,
leading to~\th~\ref{thm:rogersext} below, a generalization of a
theorem of Rogers. In the use of~\th~\ref{thm:rogersext} to get lower
measure-bounds, it is convenient if the support of $\psi$ consists of
square-free integers. This leads us to prove
\th~\ref{DirichletPNT,thm:pbohrequid}. The first generalizes
Siegel--Walfisz's and Dirichlet's theorems for prime numbers lying in
arithmetic progressions to primes lying in Bohr sets; the second
generalizes a Vinogradov's theorem~\cite{Vinogradov,Vaughan,Rhin} on
the equidistribution modulo one of $(p z)_{p \in\PP}$, where $\PP$
denotes the primes, to the equidistribution of $(p z)_{p\in\PP\cap N}$
where $N$ is some Bohr set.

\subsection{Real residue systems}

To a pair of $\ell$-tuples $\bn = (n_1, \dots, n_\ell)\in\NN^\ell$ and
$\ba = (a_1, \dots, a_\ell)\in\ZZ^\ell$ we associate the collection of
arithmetic progressions $\set{a_i + n_i\ZZ}_{i=1, \dots, \ell}$ and
refer to it as a \emph{residue system}. If
\begin{equation}\label{eq:coveringsystem}
  \ZZ = \bigcup_{i=1}^\ell \parens*{a_i + n_i\ZZ},
\end{equation}
then it is called a \emph{covering system}. Erd{\H o}s introduced
covering systems in~\cite[1950]{Erdos2}, and several questions about
them have attracted much attention over the years. The most well-known
of these questions was the minimum modulus problem---\emph{Do there
  exist covering systems with $n_1 < n_2 < \dots < n_\ell$ and $n_1$
  arbitrarily large?}---, but there have been many others. After work
of Filaseta \emph{et al.}  where the asymptotic densities of residue
systems were studied~\cite{FFKPY}, Hough answered the minimum modulus
question in the negative~\cite{Hough}. We encourage the reader to
see~\cite{FFKPY,Hough,Balisteretal} and the references therein for a
detailed account of the ongoing history of covering systems. One
result that is especially relevant here is due to Rogers, and it
states that
\begin{equation*}
  d\parens*{\bigcup_{i=1}^\ell \parens*{a_i + n_i\ZZ}} \geq   d\parens*{\bigcup_{i=1}^\ell n_i\ZZ}
\end{equation*}
for all $\ba, \bn$, where $d$ denotes asymptotic
density~\cite[Page~242]{HalberstamRoth}.  In other words, the
asymptotic density of a residue system is minimized when $\ba=\bzero$.

We will make use of an analogue of residue systems where the residues
are allowed to take arbitrary real values. To a pair of $\ell$-tuples
\begin{equation*}
  \bn = (n_1, \dots, n_\ell)\in\NN^\ell \qquad\textrm{and}\qquad \ba = (a_1, \dots, a_\ell)\in\RR^\ell,
\end{equation*}
and a real number $\eps>0$,
we associate the \emph{real residue system}
\begin{equation*}
  R(\eps, \ba, \bn) =\bigcup_{i=1}^\ell \parens*{\brackets*{a_i-\eps, a_i+\eps} + n_i \ZZ}.
\end{equation*}
Notice that the pair $(\ba, \bn)\in\ZZ^{\ell}\times\NN^\ell$ defines a
covering system if and only if $R(1/2,\ba,\bn)=\RR$. Notice, also,
that if $(\ba, \bn)\in \ZZ^{\ell}\times\NN^{\ell}$, then
\begin{equation*}
  d\parens*{\bigcup_{i=1}^\ell \parens*{a_i + n_i\ZZ}}  = \frac{1}{\lcm(\bn)}\mu(R(1/2,\ba,\bn)\cap\lcm(\bn)),
\end{equation*}
where $\lcm(\bn)=\lcm(n_1, n_2, \dots, n_\ell)$. Since
$R(1/2,\ba,\bn)$ is periodic, one may replace $\lcm(\bn)$ with any
common multiple of $n_1, \dots, n_\ell$ in the above identity.

The proof of~\th~\ref{prescription} relies crucially on the following
extension of Rogers's result to the setting of real residue systems.

\begin{theorem}[Rogers's theorem for real residues]\th\label{thm:rogersext}
  Let $\bn\in\NN^\ell$ and $\eps = 2^{-k}$ for some $k\in\NN$. Then
  \begin{equation}\label{eq:minimum}
    \mu\parens*{R(\eps,\ba, \bn)\cap [0,\lcm(\bn)]} \geq \mu\parens*{R(\eps,\bzero, \bn)\cap [0,\lcm(\bn)]}
  \end{equation}
  for all $\ba\in\RR^\ell$.
\end{theorem}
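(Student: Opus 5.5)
The plan is to reduce \th~\ref{thm:rogersext} to Rogers's theorem for integer residue systems. The tool is a "sampling along a shifted lattice" argument, which turns the Lebesgue measure of a real residue system into an average of densities of integer residue systems. The hypothesis $\eps = 2^{-k}$ enters only to make the rescaled moduli integers. Put $\delta = 2\eps = 2^{1-k}$ and $L = \lcm(\bn)$, and assume first $k\geq 1$, so that $\delta\leq 1$ and $1/\delta$ is an integer. Then $m_i := n_i/\delta = n_i 2^{k-1}$ is an integer for every $i$, and $M := L/\delta$ is a common multiple of $m_1,\dots,m_\ell$.

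\textbf{Step 1 (Fubini).} Writing $x = t + j\delta$ with $t\in[0,\delta)$ and $0\leq j< M$, we get
\begin{equation*}
  \mu\parens*{R(\eps,\ba,\bn)\cap[0,L]} = \int_0^\delta \#\set*{0\leq j<M : t+j\delta\in R(\eps,\ba,\bn)}\,dt .
\end{equation*}
Each interval $[a_i-\eps,a_i+\eps]$ is closed of length exactly $\delta$. So for all $t$ outside a finite (hence null) set of boundary values, it contains exactly one point of the progression $t+\delta\ZZ$, say $t+b_i(t)\delta$. Since $n_i = m_i\delta$, the translate $[a_i-\eps,a_i+\eps]+n_i\ZZ$ then meets $t+\delta\ZZ$ in exactly the points $t+j\delta$ with $j\in b_i(t)+m_i\ZZ$. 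Hence the integrand equals $\#\parens*{\set{0,\dots,M-1}\cap\bigcup_i (b_i(t)+m_i\ZZ)}$. Because $M$ is a common multiple of the $m_i$, this is $M$ times the asymptotic density of the integer residue system $(b(t),\mathbf m)$.

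\textbf{Step 2 (Rogers pointwise, then the case $\ba=\bzero$).} For each fixed generic $t$, Rogers's theorem gives that the integrand is at least $N_0 := \#\parens*{\set{0,\dots,M-1}\cap\bigcup_i m_i\ZZ}$, a quantity independent of $t$ and $\ba$. Integrating yields $\mu(R(\eps,\ba,\bn)\cap[0,L])\geq \delta N_0$.

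It remains to check that $\ba=\bzero$ attains this value. Take $\ba = \bzero$ and $t\in(0,\delta)$ generic. If $t<\eps$, the unique point of $t+\delta\ZZ$ in $[-\eps,\eps]$ is $t$ itself, so $b_i(t)=0$ for every $i$. If $t>\eps$, that point is $t-\delta$, so $b_i(t) = -1$ for every $i$. In the second case the system is $-1+\bigcup_i m_i\ZZ$, a common translate of the zero system, so it has the same count $N_0$ in a full period. Thus the integrand is identically $N_0$ almost everywhere, and the right side of \eqref{eq:minimum} equals $\delta N_0$. This proves \eqref{eq:minimum}.

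\textbf{Expected obstacle.} The main delicate point is the bookkeeping: we need exactly one lattice point per interval, and the moduli $n_i/\delta$ must be integers dividing $M$. This is where $\eps$ being a negative power of $2$ is used; for a general $\eps$ the quotients $n_i/(2\eps)$ need not be integers, and the reduction breaks. If $k=0$ is allowed, then $\delta = 2$ and $n_i/2$ may fail to be an integer. I would handle this case directly: either sample with spacing $1$, so that each interval contributes the two consecutive classes $c_i, c_i+1$ mod $n_i$, and prove the needed Rogers-type inequality for such "doubled" classes; or observe that $R(1,\ba,\bn)=\RR$ as soon as some $n_i\leq 2$, and otherwise adapt the argument above. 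The boundary measure-zero issues in Step 1 are routine.
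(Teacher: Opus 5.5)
Your proof is correct, and it takes a genuinely different and shorter route than the paper's.

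\textbf{The paper's route.} It first settles $\eps=1/2$ by a density-and-interpolation argument. By continuity in $\ba$ it suffices to treat dyadic $\ba$, and it inducts on the dyadic level $j$. Put $I=\{i : 2^{j+1}a_i \text{ odd}\}$ and $e_I=\sum_{i\in I}e_i$. The function $t\mapsto\mu(R(1/2,\ba+te_I,\bn)\cap[0,\lcm(\bn)])$ is linear on $[-2^{-j-1},2^{-j-1}]$. So its value at $t=0$ is at least the smaller of its values at the two level-$j$ points $\ba\pm2^{-j-1}e_I$. Rogers's integer theorem is the base case, and $\eps=2^{-k}$ with $k\geq2$ then follows by dilating by $2^{k-1}$.

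\textbf{Your route.} Slicing along the cosets $t+\delta\ZZ$ gives an exact identity: the measure of the real residue system is $\delta$ times the average over $t\in[0,\delta)$ of the counts, over a period, of the integer residue systems $(b(t),\mathbf m)$. Rogers then applies pointwise in $t$. For $\ba=\bzero$ every slice is a translate of the zero system, which gives equality there.

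\textbf{What each buys.} Your argument handles all real $\ba$ at once, with no continuity step and no induction, and with the dilation built into $m_i=n_i/\delta$. It also makes plain that the only property of $\eps$ used is $n_i/(2\eps)\in\ZZ$ for every $i$. The paper's dilation step needs exactly the same integrality, so either proof in fact covers $\eps=1/(2q)$ for every $q\in\NN$. The paper's route records a little extra structure---linearity of the measure in $\ba$ between neighbouring dyadic grid points---but none of it is needed for the theorem. Your averaging identity is the more economical argument.

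Finally, your closing discussion of $k=0$ can be dropped. The paper's proof covers only $k\geq1$ (base case $\eps=1/2$, then $k\geq2$), which is exactly the range your Step~1 already handles.
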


\begin{remark*}
  When $\ba\in\ZZ^\ell$ and $\eps=1/2$, \th~\ref{thm:rogersext} is a
  restatement of the above mentioned theorem of Rogers. 
\end{remark*}

The functions $\psi$ that we construct in the proof
of~\th~\ref{prescription} give rise to a series of real residue
systems, and the measure bound in~\th~\ref{thm:rogersext} will redound
to lower bounds on the measures of certain approximation sets
associated to $\psi$ and an inhomogeneous parameter $z\in
Z$. \th~\ref{thm:rogersext} allows for a comparison to the homogeneous
case $z=0$. Once there, the measure bounds will come from a sieving
argument that is made easier if there is a uniform upper bound on
$\gcd(n_1, \dots, n_\ell)$, where $n_1, \dots, n_\ell$ are the moduli
of the associated real residue system and lie in a Bohr set.

This motivates us to construct $\psi$ in such a way that the residue
systems that arise have moduli lying in $\PP$, the primes. In order to
show that $\psi$, so constructed, has the desired properties
for~\th~\ref{prescription}, it becomes important to understand the
distribution of primes lying in Bohr sets.

\subsection{Distribution of primes lying in Bohr sets}

We generalize two classical results about primes lying in arithmentic
progressions to the setting of primes lying in Bohr sets.

The first is a generalization of the prime number
theorem for arithmetic
progressions. For $c\in\NN$ and
$a\in\ZZ$, denote
\begin{equation*}
  \PP_a = \set{p\in \PP : p\equiv a\pmod c}.
\end{equation*}
The prime number theorem for arithmetic progressions states that
\begin{equation*}
  \#(\PP_a \cap [1,X]) \sim \frac{X}{\varphi(c)\log X}\qquad (X\to\infty)
\end{equation*}
if $\gcd(a,c)=1$, where $\varphi$ is Euler's totient function. This
was first proved by Siegel--Walfisz. Previously, Dirichlet had shown
that $\sum_{p\in \PP_a} 1/p$ diverges, a result whose corresponding
generalization to Bohr sets we will also need.
(See~\cite[Chapter~II.8]{Tenenbaum})

Let $\bw,\bv\in\RR^\ell$ and $\eps>0$, and denote
\begin{equation*}
  H=\overline{\set{n\bw \bmod 1 : n\in\NN}}\subset \TT^\ell 
\end{equation*}
and its normalized Haar measure $\mu_H$. Let $c$ be the number of
connected components of $H$ and put
\begin{equation*}
  H^\times=\overline{\set{n\bw \bmod 1 : \gcd(n,c)=1}}.
\end{equation*}
Let $\mu^\times$ denote the renormalized restriction of $\mu_H$ to
$H^\times$, that is,
\begin{equation*}
  \mu^\times = \frac{1}{\mu(H^\times)} \bone_{H^\times}\mu_H.
\end{equation*}
We prove the following.

\begin{theorem}[Prime number theorem for Bohr sets]\th\label{DirichletPNT}
  Let $\bw, \bv\in\RR^\ell$ and $\eps>0$. Let $H^\times$ and
  $\mu^\times$ be as above, and let $B(\bv,\eps)$ the ball of radius
  $\eps>0$ centered at $\bv$.
  \begin{enumerate}[label=\alph*)]
  \item If $B(\bv,\eps)\cap H^\times \neq \emptyset$, then
    $\#(\PP\cap N_\bv(\bw, \eps))=\infty$ and, moreover,
    \begin{equation*}
      \#(\PP\cap N_{\bv}(\bw, \eps) \cap [1,X]) \sim \mu^\times(B(\bv,\eps)) \frac{X}{\log X}.
    \end{equation*}
    
  \item If $B(\bv,\eps)\cap H^\times = \emptyset$, then  $\#(\PP\cap N_\bv(\bw, \eps))<\infty$.
  \end{enumerate}
\end{theorem}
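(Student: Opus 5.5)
The plan is to reduce both parts to an equidistribution statement: the points $p\bw \bmod 1$, with $p$ ranging over primes not dividing $c$, equidistribute in $H^\times$ with respect to $\mu^\times$. The ball statements then follow by standard portmanteau-type arguments.

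\emph{Structure of $H$.} First I will record the structure of $H$. It is a closed subgroup of $\TT^\ell$, so $H = H_0 + F$, where $H_0$ is its identity component (a subtorus) and $H/H_0$ is finite of order $c$. Since $\bw$ generates $H$ topologically, the image of $\bw$ generates the finite quotient, so $H/H_0\cong \ZZ/c\ZZ$ and $n\bw \in n\bw_0 + H_0$, where the coset depends only on $n \bmod c$. Hence
\begin{equation*}
  H^\times=\bigcup_{\gcd(j,c)=1}(j\bw+H_0),
\end{equation*}
and $\mu^\times$ gives mass $1/\varphi(c)$ to each of these cosets, spread as Haar measure on it.

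\emph{Weyl criterion.} The characters of $H$ are restrictions of $\bx\mapsto e(\bh\cdot\bx)$ with $\bh\in\ZZ^\ell$. For such $\bh$ with nontrivial restriction, I will show
\begin{equation*}
  \frac{1}{\pi(X)}\sum_{p\leq X} e(p\,\bh\cdot\bw) \;\to\; \int e(\bh\cdot\bx)\,d\mu^\times(\bx).
\end{equation*}
There are two cases.
\begin{itemize}
  \item[(i)] If $\bh\cdot\bw$ is irrational, Vinogradov's theorem gives that the left side is $o(1)$. The right side is also $0$, because the character is then nontrivial on $H_0$. Indeed, if it were trivial on $H_0$ it would factor through $\ZZ/c\ZZ$, forcing $c\,\bh\cdot\bw\in\ZZ$.
  \item[(ii)] If $\bh\cdot\bw=b/d$ is rational, the character is trivial on the subgroup generated by $d\bw$, so it factors through $H/H_0$ and we may take $d\mid c$. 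The sum then splits over residue classes $p\equiv j \pmod c$. By Siegel--Walfisz, each class with $\gcd(j,c)=1$ gets density $1/\varphi(c)$, and the finitely many primes dividing $c$ are negligible. The limit is therefore $\varphi(c)^{-1}\sum_{\gcd(j,c)=1}e(jb/d)$, which is exactly the $\mu^\times$-integral of the character.
\end{itemize}
This gives weak-$*$ convergence of the empirical measures to $\mu^\times$.

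\emph{Part b).} If $p\nmid c$ then $p\bw\in H^\times$, so $p\bw\notin B(\bv,\eps)$ and $p\notin N_\bv(\bw,\eps)$. Only the finitely many primes dividing $c$ can remain.

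\emph{Part a).} The set $B(\bv,\eps)\cap H^\times$ is nonempty and relatively open in $H^\times$. Since Haar measure on each coset has full support, it has positive $\mu^\times$-measure. The portmanteau liminf inequality for open sets then gives $\liminf \#(\PP\cap N_\bv\cap[1,X])/\pi(X)\geq \mu^\times(B)>0$, which already yields infinitude. For the exact asymptotic I also need the limsup bound $\mu^\times(\overline{B})$, so I need $\mu^\times(\partial B(\bv,\eps))=0$.

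\emph{Main obstacle.} This boundary condition is the step I expect to be hardest. For a Euclidean ball, the boundary sphere meets each coset $j\bw+H_0$ in a proper real-algebraic subset unless the coset lies inside the sphere. That is impossible for a coset of a positive-dimensional subtorus, and for a finite $H$ it is a finite check. So the boundary is Haar-null on each coset, and this should go through. For the sup-norm distance, however, a face of the cube could contain an entire coset. In that case I would prove the result for the open ball as an interior approximation (the liminf bound, which is still enough for infinitude), and verify or impose the boundary-null condition for the asymptotic.
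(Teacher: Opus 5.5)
Your route is essentially the paper's: Weyl's criterion, Vinogradov's theorem for characters with $\bh\cdot\bw\notin\QQ$, and the prime number theorem in progressions mod $c$ for the coset bookkeeping. You package this as a single equidistribution statement for $\mu^\times$ on $H^\times$, whereas the paper proves equidistribution of $(p\bw)_{p\in\PP_a}$ on each coset $H_a$ and then sums over $a$. The two are equivalent. Your character computation, part b), and the infinitude claim in part a) are all fine.

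\textbf{The gap is the asymptotic in part a).} You reduce it to $\mu^\times(\partial B(\bv,\eps))=0$, with the boundary taken in $\TT^\ell$. That condition genuinely fails in cases the theorem covers, so neither your ``finite check'' nor imposing the condition as a hypothesis proves the statement.
\begin{itemize}
\item \emph{Finite $H$.} Take $\ell=1$, $w=1/3$, $v=1/3$, $\eps=1/3$. Then $H^\times=\{1/3,2/3\}$ and $B=(0,2/3)$, and the boundary point $2/3$ carries $\mu^\times$-mass $1/2$. Portmanteau then gives only $1/2\leq\liminf\leq\limsup\leq 1$, although the true density is $1/2$.
\item \emph{Sup-norm.} Take $\bw=(\alpha,1/3)$ with $\alpha\notin\QQ$, $\bv=(0,2/3)$, $\eps=1/3$. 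A face of the cube contains an arc of $\TT\times\{1/3\}\subset H^\times$, yet the theorem still holds.
\end{itemize}

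The missing observation is that every $p\bw$ with $p\nmid c$ already lies in $H^\times$. So you should apply portmanteau on the compact space $H^\times$ to $U=B(\bv,\eps)\cap H^\times$; only the boundary of $U$ \emph{relative to} $H^\times$ matters, and it is always $\mu^\times$-null. To see this:
\begin{itemize}
\item The cosets $j\bw+H_0$ are clopen in $H^\times$, so it suffices to work on one coset at a time.
\item Let $V\subset\RR^\ell$ be the rational subspace projecting onto $H_0$, and let $\mathcal{B}=\{x\in\RR^\ell:\abs{x-\bv}<\eps\}$.
\item The preimage of $U\cap(j\bw+H_0)$ under the covering map $V\ni t\mapsto j\bw+t \bmod 1$ is $\bigcup_{m\in\ZZ^\ell}V\cap(\mathcal{B}+m-j\bw)$. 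This is a locally finite union of relatively open convex subsets of $V$.
\item Boundaries of convex sets are Lebesgue-null, so the relative boundary of $U$ is Haar-null on each coset. This holds for any norm with convex balls, and the relative boundary is empty when $H_0$ is a point.
\end{itemize}
Hence $U$ is a $\mu^\times$-continuity set, and the asymptotic follows with no extra hypothesis.

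The paper itself does not address this point: it defines equidistribution by convergence on all open sets and applies its coset proposition directly to $B(\bv,\eps)$. You were right to flag the step, and the same observation is what justifies the paper's version too.
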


\begin{remark*}
  When $w = 1/c$ and $v=aw$, where $(a,c)\in \ZZ\times \NN$, and
  $0 < \eps < 1/c$, the above theorem returns the prime number theorem
  for arithmetic progressions. In this case, $\mu^\times$ is the
  measure supported uniformly on the points
  $\set{n/c : \gcd(n,c)=1}\subset\RR/\ZZ$, and one has
  $B(v,\eps) = \set{a/c}$ and $\mu^\times(B(v,\eps)) = 1/\varphi(c)$.
\end{remark*}

The next result concerns equidistribution. A sequence
$(x_n)_{n\in\NN}$ of real numbers is said to be equidistributed modulo
one if for every nonempty open interval $I\subset \RR/\ZZ$, one has
\begin{equation*}
 \lim_{N\to\infty}\frac{1}{N}\sum_{n\leq N} \bone_I(x_n\bmod 1) =\mu(I).
\end{equation*}
The foundational theorem, known as the Kronecker--Weyl theorem, says
that for $z\in\RR$, the sequence $(nz)_{n\in\NN}$ is equidistributed
modulo one if and only if $z$ is irrational.

This descends to the primes. The sequence $(pz)_{p\in\PP}$ is
equidistributed modulo one if $z$ is irrational, a result that is
often attributed to Vinogradov~\cite{Vinogradov} and can also be found
in work of Rhin~\cite{Rhin} and Vaughan~\cite{Vaughan}. It is not hard
to extend this to primes lying in arithmetic progressions: If
$\gcd(a,c)=1$ and $z$ is irrational, then $(pz)_{p\in\PP_a}$ is
equidistributed modulo one. Again, we generalize from primes lying in
arithmetic progressions to primes lying in Bohr sets.

\begin{theorem}[Equidistribution along Bohr-set primes]\th\label{thm:pbohrequid}
  Let $\by,\bv\in \RR^k$ and $\eps>0$, and let $c$ be the number of connected components of
  \begin{equation*}
    \overline{\set{n\by\bmod 1 : n\in\NN}}.
  \end{equation*}
  If
  \begin{equation*}
    B(\bv,\eps)\cap \overline{\set{n\by\bmod 1 : \gcd(n,c)=1}}\neq \emptyset,
  \end{equation*}
  then for every $z \in \RR\setminus\Span_\QQ\set{1, y_1, \dots, y_k}$,
  the sequence $\parens*{pz}_{p\in \PP\cap N_\bv(\by, \eps)}$ is
  equidistributed modulo one.
\end{theorem}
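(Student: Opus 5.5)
The plan is to prove joint equidistribution of $(p\by, pz)$ along primes and then cut the first coordinate down to the Bohr set. Put $\bw = (\by, z)\in\RR^{k+1}$ and let
\[
  H' = \overline{\set{n\bw\bmod 1 : n\in\NN}}\subset\TT^{k+1}
\]
be the orbit closure. Since $z\notin\Span_\QQ\set{1,y_1,\dots,y_k}$, every character $\chi_{(\bm,j)}(\bx,t)=e(\bm\cdot\bx + jt)$ with $j\neq 0$ is nontrivial along $n\bw$. Hence $H' = H\times\TT$, where $H$ is the orbit closure of $\by$. Indeed, the annihilator of $H'$ consists of those $(\bm,j)$ with $\bm\cdot\by+jz\in\ZZ$, which forces $j=0$ by the independence assumption.

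In particular $H'$ has the same number $c$ of connected components as $H$, and
\[
  H'^\times = H^\times\times\TT.
\]
The key input is then the equidistribution of $(p\bw)_{p\in\PP}$ on the orbit closure. Specifically, $(p\bw)_{p\in\PP,\,p\equiv a \ (\mathrm{mod}\ c)}$ equidistributes with respect to the Haar measure on the coset component $a\bw + H'_0$, where $H'_0$ is the identity component. I take this to be the mechanism behind \th~\ref{DirichletPNT}, applied there to $\bw$ in place of $\by$.

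I would establish it by Weyl's criterion. For a character $\chi$ that is nontrivial on $H'_0$, one has $\chi(n\bw)=e(n\theta)$ with $\theta = \bm\cdot\by + jz$, and such $\theta$ is irrational. Vinogradov's theorem, in its arithmetic-progression form, gives
\[
  \sum_{p\le X,\ p\equiv a\ (c)} e(p\theta) = o(X/\log X).
\]
Characters that are trivial on $H'_0$ have rational $\theta$ with denominator dividing $c$, so they are constant on each residue class mod $c$. Summing over $a$ coprime to $c$, and using Siegel--Walfisz for the weights, the primes $p\bw$ equidistribute with respect to $\mu'^\times$, the normalized Haar measure on $H'^\times$. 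Since $H'^\times=H^\times\times\TT$, this measure is the product $\mu^\times\otimes\mathrm{Leb}$.

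Next I pass from continuous test functions to indicators. Let $I\subset\TT$ be an interval. The set $B(\bv,\eps)\times I$ has boundary of $\mu'^\times$-measure zero. This holds because $\partial B(\bv,\eps)$ is a sphere in the torus, and it has $\mu^\times$-measure zero: $\mu^\times$ is Haar on finitely many cosets of a subtorus, and a sphere meets a translated subtorus in a proper real-algebraic subset, which is null. By the portmanteau theorem,
\[
  \#\set{p\le X : p\in N_\bv(\by,\eps),\ pz\in I} \sim \mu^\times(B(\bv,\eps))\,\mu(I)\,\frac{X}{\log X}.
\]
Dividing by the count from \th~\ref{DirichletPNT}a), which has the same asymptotic with $\mu(I)$ replaced by $1$, gives the result. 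That count is legitimate to divide by because the hypothesis $B(\bv,\eps)\cap H^\times\neq\emptyset$ together with openness of the ball gives $\mu^\times(B(\bv,\eps))>0$. The quotient is $\mu(I)$, which is exactly equidistribution of $(pz)$ along $\PP\cap N_\bv(\by,\eps)$, ordered by size.

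The main obstacle is the joint equidistribution step. One part is the bookkeeping of components: showing that characters trivial on the identity component have rational frequency with denominator dividing $c$, and that the components met by primes are exactly those in $H^\times$. The other part is the null-boundary claim for the ball. If \th~\ref{DirichletPNT} is proven via exactly this equidistribution statement, I would simply apply it to $(\by,z)$ and cite it, and the remaining work is the product-structure computation $H'=H\times\TT$.
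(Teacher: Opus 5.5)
Your route is the paper's: set $\bw=(\by,z)$, identify the orbit closure as $H\times\TT$ with components $H_a\times\TT$, get equidistribution of $(p\bw)_{p\in\PP_a}$ on $H_a\times\TT$ for $\gcd(a,c)=1$ from Vinogradov's theorem in progressions plus Siegel--Walfisz, restrict to $B(\bv,\eps)\times I$, and divide by the count from the prime number theorem for Bohr sets. The one step that fails as you justify it is the claim $\mu^\times(\partial B(\bv,\eps))=0$. Your ``proper real-algebraic subset'' argument needs $\dim H_0\geq 1$. When $\by\in\QQ^k$, $H_0$ is a point, $\mu^\times$ is atomic, and an atom of $H^\times$ can lie on the sphere. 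For example, take $k=1$ and $y=v=\eps=1/5$. Then $c=5$, and the hypothesis holds since $1/5\in B(\bv,\eps)\cap H^\times$. But $2/5\in\partial B(\bv,\eps)$ carries $\mu^\times$-mass $1/4$, so the portmanteau theorem gives only $\liminf$/$\limsup$ bounds, not your asymptotic. If $\norm{\cdot}$ is the sup norm, which the paper does not exclude, the same failure occurs in positive dimension. Take $\by=(y_1,1/3)$ with $y_1$ irrational, $\bv=(0,8/15)$ and $\eps=1/5$. A face of the cube then contains a segment of the component $\TT\times\set{1/3}$ of $\mu^\times$-mass $1/5$, while $\TT\times\set{2/3}$ meets the open cube.

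The repair is to do the bookkeeping one residue class at a time, as the paper does when it sums $\#(\PP_a\cap[1,X])\,(\mu_a\times\mu_\TT)(B(\bv,\eps)\times I)$ over $\gcd(a,c)=1$. Your per-class equidistribution statement is exactly the input needed. For each such $a$, one of two things holds.
\begin{itemize}
\item $\mu_a(\partial B(\bv,\eps))=0$. If $\dim H_0\geq 1$, the preimage of $H_a$ in $\RR^k$ is a union of positive-dimensional affine subspaces. These meet the locally finitely many Euclidean spheres of radius $\eps$ about the points of $\bv+\ZZ^k$ in null sets. For the sup norm, each coordinate restricted to $H_a$ is either onto $\TT$ with Lebesgue pushforward, or constant.
\item $H_a\cap B(\bv,\eps)=\emptyset$. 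This covers a point $H_a$ on the sphere, or a coordinate constant on $H_a$ at distance exactly $\eps$ from the corresponding coordinate of $\bv$.
\end{itemize}
In the second case, $p\by\bmod 1\in H_a$ exactly for every prime $p\equiv a\pmod c$. So these primes never lie in $N_\bv(\by,\eps)$, and they contribute exactly $0=\#(\PP_a\cap[1,X])\,\mu_a(B(\bv,\eps))\,\mu(I)$. Apply the portmanteau theorem only to the classes of the first kind and sum over $a$; this recovers your asymptotic. Your positivity argument and the final division then go through unchanged.
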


\begin{remark*}
  When $y = 1/c$, $v=ay$, where $\gcd(a,c)=1$, and $0 < \eps < 1/c$,
  the above theorem says that $(pz)_{p\in \PP_a}$ is equidistributed
  modulo one.
\end{remark*}

\subsection{Further discussion and questions}\label{sec:furth-disc-quest}

Before proceeding to the proofs, we pose some questions that arise
naturally but are not pursued here.

\

The functions $\psi$ that we construct for~\th~\ref{prescription} are
far from monotonic. They are not even monotonic on their support. The
same is true of the classical Duffin--Schaeffer counterexamples
in~\cite{duffinschaeffer} and the inhomogeneous versions
in~\cite{ds_counterex}. The recent paper~\cite{CHPR} constructs
examples that \emph{are} monotonic on their support and in fact can
have values on the support that match any prescribed decreasing
function $f$, although in the inhomogeneous cases they require
restrictions on the inhomogeneous parameters. We ask the following. 

\begin{problem}\th\label{monotoniconsupport}
  Is~\th~\ref{prescription} true with the added condition that $\psi$
  is monotonic on its support? Given a non-increasing function
  $f:\NN \to \RR_{\geq 0}$, is~\th~\ref{prescription} true for a function
  $\psi = \bone_S f$ for some $S\subset\NN$?
\end{problem}

We hasten to point out that the answers cannot simply be yes. By a
result of Yu~\cite[Theorem~1.3]{Yu}, if
$\psi(q) \ll \frac{1}{q(\log\log q)^2}$ and $\sum\psi(q)$ diverges,
then $\mu(W(\psi,y)) = 1$ for every ``tamely Liouville'' number
$y$. This immediately implies that if
$f(q) = \frac{1}{q(\log\log q)^2}$, then~\th~\ref{monotoniconsupport}
must require restrictions on the set $Y\subset\RR$.

\

Regarding~\th~\ref{thm:rogersext}, part of the proof relies on a trick
where we dilate real residue systems $R(\eps,\ba,\bn)$ until their
intervals have unit length. For this reason, it is convenient to
restrict to values of $\eps>0$ that are powers of $1/2$. We ask the
following question.

\begin{problem}\th\label{prob:generalrogers}
  Is~\th~\ref{thm:rogersext} true with general $\eps>0$? 
\end{problem}

One can also ask a version of the question where $\bn\in\RR^\ell$. In
this generality, the residue system $R(\eps,\ba,\bn)$ is not
necessarily periodic, so rather than intersect with a fixed interval
(like $[0,\lcm(\bn)]$), it would make sense to intersect with an
interval $[-X,X]$ and study measure bounds as $X\to\infty$.

\

Finally, we pose an additive combinatorial question, whose answer
appears in Appendix~\ref{sec:manuel-hauke}, contributed by Manuel
Hauke. In Lemma~\ref{lem:1/p} we show that the Bohr sets arising in
our constructions contain enough primes that the sums of their
reciprocals diverge. The question arose whether this phenomenon holds
more generally.

\begin{problem}[Answered by~\th~\ref{neg_answer}]\th\label{ntproblem}
  Does every positive-density set $A\subset\NN$ have a subset
  $B\subset A$ such that $\sup_{m,n\in B}\gcd(m,n) < \infty$ and such
  that $\sum_{n\in B}\frac{1}{n}$ diverges?
\end{problem}

After the first posting of this paper, Manuel Hauke wrote to us and
kindly gave a negative answer. \th~\ref{thm_charact} shows that the sets $A$
for which the answer is ``yes'' are precisely those that contain
$n_0\calP$ for some $n_0\in\NN$ and $\calP\subset\PP$ with
$\sum_{p\in\calP}1/p$ diverging. \th~\ref{neg_answer} uses this
characterization to construct density-one sets for
which~\th~\ref{ntproblem} does not hold.

\section{Proof of Theorem~\ref{prescription}}

For $q\in\NN$, $y\in\RR$, and $\delta>0$, denote
\begin{equation*}
  A_q^y(\delta) = \set{x\in[0,1] : \norm{qx-y} < \delta}. 
\end{equation*}
When $\psi:\NN\to\RR$, write $A_q^y(\psi) = A_q^y(\psi(q))$ for brevity.

The proof of~\th~\ref{prescription} reduces to the following
proposition, whose proof appears in Section~\ref{sec:proof-th-refm}.

\begin{proposition}\th\label{modestmeasureofunions}
  Let $\set{y_1, y_2, \dots, y_k}\subset\RR$ and
  $z\in \RR\setminus \Span_\QQ\set{1, y_1, \dots, y_k}$. Let
  $0 < \eps < 1/2$. Then there exists a square-free integer $P>1$ such
  that
  \begin{equation}\label{eq:5}
    \mu\parens*{\bigcup_{\substack{p\mid P \\ p\textrm{ prime}}} A_{P/p}^y\parens*{\frac{\eps}{2p}}} < 2\eps\quad\textrm{for each}\quad y\in \set{y_1, \dots, y_k},
  \end{equation}
  and
  \begin{equation}\label{eq:6}
    \mu\parens*{\bigcup_{\substack{p\mid P \\ p\textrm{ prime}}} A_{P/p}^z\parens*{\frac{\eps}{2p}}} \geq \frac{1}{3}.
  \end{equation}
\end{proposition}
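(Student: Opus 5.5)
We will take $P = p_1 p_2\cdots p_m$, a product of distinct primes drawn from a single Bohr set
\begin{equation*}
  N = N_{\bzero}(\by,\delta), \qquad \by=(y_1,\dots,y_k),
\end{equation*}
with $\delta>0$ small compared with $\eps$, and we will choose $m$ large. The upper bound~\eqref{eq:5} and the lower bound~\eqref{eq:6} are handled separately.

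\emph{The $y$-estimate~\eqref{eq:5}.} Write $Q_i=P/p_i$. Each $A_{Q_i}^{y}(\eps/(2p_i))$ consists of $Q_i$ intervals of length $\eps/(Q_ip_i)=\eps/P$, centred at the points $(j+y)/Q_i$. Since $p_i\in N$, we have $\norm{p_i y}<\delta$ for every $y\in\set{y_1,\dots,y_k}$. Hence $(j+y)/Q_i = (jp_i + p_iy)/P$ lies within $\delta/P$ of a point of $\frac{1}{P}(\ZZ + y')$, where $y'$ is a fixed representative such that $p_iy\approx y' \pmod 1$ for all $i$. The subtlety is that $p_iy\bmod 1$ is close to $0$ for every $i$, so the shifts all cluster near $y/P$, up to an error $\delta/P$. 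It follows that every interval in the union lies inside
\begin{equation*}
  \bigcup_{r\in\ZZ}\brackets*{\frac{r}{P}-\frac{\eps+2\delta}{P},\ \frac{r}{P}+\frac{\eps+2\delta}{P}},
\end{equation*}
whose measure is $2(\eps+2\delta)$, up to the exact shift. Taking $\delta$ small gives a bound slightly above $2\eps$. To get the strict inequality $<2\eps$, we refine the count: the intervals have half-length $\eps/(2P)$, so the union lies in a set of measure at most $\eps+2\delta<2\eps$ once $\delta<\eps/2$. This step is routine.

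\emph{The $z$-estimate~\eqref{eq:6}.} Rescale by $P$. The set $A_{Q_i}^z(\eps/(2p_i))$ becomes the real residue set $[a_i-\eps/2,a_i+\eps/2]+p_i\ZZ$ inside $[0,P]$, with $a_i=zp_i/ \!\ldots$, more precisely $a_i\equiv$ (the shift $p_iz$ in units where residues are taken modulo $p_i$). Thus the union is $\frac1P\bigl(R(\eps/2,\ba,\bn)\cap[0,P]\bigr)$ with $\bn=(p_1,\dots,p_m)$ and $\lcm(\bn)=P$. By \th~\ref{thm:rogersext}, after replacing $\eps$ by a nearby power of $1/2$ (shrinking $\eps$ only decreases the set, so it is harmless for a lower bound), the measure is at least the homogeneous value $\frac1P\mu(R(\eps',\bzero,\bn)\cap[0,P])$. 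Since the $p_i$ are distinct primes, this equals the proportion of $x\in[0,P]$ lying within $\eps'$ of a multiple of some $p_i$. A CRT and inclusion–exclusion argument gives approximately
\begin{equation*}
  1-\prod_i\parens*{1-\frac{2\eps'}{p_i}}\approx 1-\exp\parens*{-2\eps'\sum_i \frac{1}{p_i}}.
\end{equation*}
This exceeds $1/3$ once $\sum 1/p_i$ is large. Divergence of $\sum_{p\in\PP\cap N}1/p$ follows from \th~\ref{DirichletPNT}(a), because $\bzero\in H^\times$ (take $n=1$ is not enough, but $n\equiv1\pmod c$ approximating $0$ works), so the Bohr-set primes have positive relative density.

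Something is lost in this sketch. The hypothesis on $z$, and \th~\ref{thm:pbohrequid}, must be used to prevent the $z$-shifts from collapsing the way the $y$-shifts do. Indeed, Rogers's bound alone already gives the lower bound for any residues, so the real role of the hypothesis must be elsewhere. It may enter in choosing the primes so that the relevant $\eps$ is compatible, or in the full proof where multiple scales are combined. I expect the main obstacle to be making the homogeneous sieve lower bound rigorous with $\eps'$ a power of $2$ and finitely many primes. The equidistribution of $p_iz$ supplies the needed independence if a direct second-moment bound is used instead.
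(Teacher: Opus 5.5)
Your proposal has a genuine gap in the $z$-estimate. The homogeneous comparison value is not close to $1$. After rescaling by $P$, $A^0_{P/p_i}(\eps/(2p_i))$ becomes the set of $u\in[0,P]$ within $\eps/2$ of $p_i\ZZ\subset\ZZ$. The whole union therefore sits inside the $\eps/2$-neighbourhood of $\ZZ$, and its normalized measure is exactly $\eps\bigl(1-\prod_i(1-1/p_i)\bigr)\leq\eps$. The events ``within $\eps'$ of $p_i\ZZ$'' are not independent with probability $2\eps'/p_i$ each; CRT independence applies only to the nearest integer $n$, through the events $p_i\mid n$.

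So Theorem~\ref{thm:rogersext}, applied to all primes at once, can never give more than $\eps$. This is exactly the tension you noticed: the $y$-union is also a real residue system, and your formula would contradict~\eqref{eq:5}.

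The missing idea, and the actual use of $z\notin\Span_\QQ\set{1,y_1,\dots,y_k}$, is to sort the primes by the position of $pz\bmod 1$. Take $\eps$ a power of $2$ and split $\RR/\ZZ$ into $1/\eps$ arcs $I_j$ of length $\eps$. Apply Rogers's theorem separately to each class $\set{p : pz\in I_j}$. Its union has measure at least $\eps\bigl(1-\prod_{pz\in I_j}(1-1/p)\bigr)\geq 2\eps/3$ for $X$ large, because $\sum 1/p$ over Bohr-set primes with $pz\in I_j$ diverges. That is Lemma~\ref{lem:1/p}, which rests on the equidistribution in Theorem~\ref{thm:pbohrequid}; divergence of $\sum 1/p$ over the whole Bohr set is not enough.

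Next, if $pz\in I_i$ and $p'z\in I_j$ with $I_i,I_j$ non-adjacent, then $\norm{(p-p')z}\geq\eps$. This forces $A^z_{P/p}(\eps/(2p))\cap A^z_{P/p'}(\eps/(2p'))=\emptyset$ (Lemma~\ref{lem:avoidance}). Summing over every other arc gives $\frac{1}{2\eps}\cdot\frac{2\eps}{3}=\frac13$.

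There is a second, independent failure in your choice of Bohr set. The claim $\bzero\in H^\times$ is false whenever $c>1$. The points $n\by$ with $\gcd(n,c)=1$ lie in $\bigcup_{\gcd(a,c)=1}H_a$, a closed set disjoint from $H_0\ni\bzero$; in particular, your $n\equiv 1\pmod c$ land in $H_1=\by+H_0$. Hence $B(\bzero,\delta)\cap H^\times=\emptyset$ for small $\delta$, and Theorem~\ref{DirichletPNT}(b) says $N_{\bzero}(\by,\delta)$ contains only finitely many primes. For example, $y_1=1/2$ leaves only $p=2$, so $P=2$ and~\eqref{eq:6} fails.

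Since the proposition is applied with arbitrary $y_i$, including rationals, you must center the Bohr set at $\by$, i.e.\ use $N_{\by}(\by,\delta)$. Then $n=1$ shows $\by\in H^\times$. Moreover, $\norm{(p-1)y}<\delta$ puts each constituent interval of $A^y_{P/p}(\eps/(2p))$ within $\delta/P$ of a centre of $A^y_P$. So the $y$-union lies in $A_P^y(\eps/2+\delta)$, of measure $\eps+2\delta<2\eps$, and your $y$-estimate survives unchanged.
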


\begin{proof}[Proof of~\th~\ref{prescription} modulo~\th~\ref{modestmeasureofunions}]
  The function $\psi$ will be supported in a union of blocks which are
  most conveniently enumerated as a triangular array,
  \begin{gather*}
    S_{1,1}, \\
    S_{2,1},\quad S_{2,2}, \\
    S_{3,1},\quad S_{3,2},\quad S_{3,3}, \\
    \dots
  \end{gather*}
  so that
  $\supp\psi = \bigcup_{k=1}^\infty\bigcup_{\ell=1}^k S_{k,\ell}$
  where each $S_{k,\ell}$ is a finite set of integers which we now
  define.

  First, enumerate the sets $Y=\set{y_1, y_2, y_3, \dots}$ and
  $Z = \set{z_1, z_2, z_3, \dots}$. We proceed inductively, moving
  through $\set{(k,\ell)\in\ZZ_+^2 : 1\leq \ell \leq k}$ according to the lexicographic ordering
  \begin{align*}
    (i,j) \prec (k,\ell)
    &\iff [i < k] \textrm{ or } [i = k \textrm{ and } j < \ell] \\
    &\iff \lex(i,j) < \lex(k,\ell)
  \end{align*}
  where
  \begin{equation*}
    \lex(k,\ell) = \frac{k(k-1)}{2} + \ell. 
  \end{equation*}
  For each $k,\ell$, we
  apply~\th~\ref{modestmeasureofunions} with
  $y_1, \dots, y_k, z_\ell, 2^{-k}$ filling the roles of
  $y_1, \dots, y_k, z, \eps$, respectively. This results in a
  square-free $P_{k,\ell}>1$ such that~(\ref{eq:5}) and~(\ref{eq:6})
  hold. Set
  \begin{equation*}
    S_{k,\ell} = \set*{\frac{2^{m_{k,\ell}} P_{k,\ell}}{p} : p\mid P_{k,\ell}\textrm{ and } p \textrm{ is prime}},
  \end{equation*}
  where $m_{k,\ell}$ is a sufficiently large natural number (to be
  chosen), and define $\psi:\NN\to\RR$ by
  \begin{equation*}
    \psi(q) = 
    \begin{cases}
      \displaystyle\frac{2^{-k}}{2p} &\textrm{if } q\in S_{k,\ell} \textrm{ for some } k, \ell \\
      \displaystyle 0 &\textrm{otherwise.}
    \end{cases}
  \end{equation*}
  Notice that for arbitrary $w\in \RR$,  we have
  \begin{equation*}
    \bigcup_{q\in S_{k,\ell}} A_q^{w}(\psi) = T^{-m_{k,\ell}} \parens*{\bigcup_{p \mid P_{k,\ell}} A_{P_{k,\ell}/p}^{w}\parens*{\frac{2^{-k}}{2p}}},
  \end{equation*}
  where $T:\TT\to\TT$ is the circle doubling map.  Since it is measure
  preserving we have
  \begin{equation}\label{eq:3}
    \mu\parens*{\bigcup_{q \in S_{k,\ell}} A_q^y(\psi)} < 2^{-k}\quad\textrm{for each}\quad y\in \set{y_1, \dots, y_k},
  \end{equation}
  and
  \begin{equation}\label{eq:4}
    \mu\parens*{\bigcup_{q\in S_{k,\ell}} A_q^{z_\ell}(\psi)} \geq \frac{1}{3},
  \end{equation}
  by~(\ref{eq:5}) and~(\ref{eq:6}).  Furthermore, $T$ is mixing,
  meaning in particular that
  \begin{multline*}
    \lim_{m\to\infty}\mu\parens*{\bigcup_{q\in S_{i,j}}A_q^z(\psi)\cap T^{-m}\parens*{\bigcup_{p \mid P_{k,\ell}}A_{P_{k,\ell}/p}^z\parens*{\frac{2^{-k}}{2p}}}} \\
    = \mu\parens*{\bigcup_{q\in S_{i,j}}A_q^z(\psi)}\mu\parens*{\bigcup_{p \mid P_{k,\ell}}A_{P_{k,\ell}/p}^z\parens*{\frac{2^{-k}}{2p}}} \\
    =\mu\parens*{\bigcup_{q\in S_{i,j}}A_q^z(\psi)} \mu\parens*{\bigcup_{q\in S_{k,\ell}}A_q^z(\psi)}.
  \end{multline*}
  Hence, the parameter $m_{k,\ell}$ can be chosen large enough that
  \begin{equation*}
    \min S_{k,\ell} > \max S_{i,j}
  \end{equation*}
  and
  \begin{equation}\label{eq:quasiind}
    \mu\parens*{\bigcup_{q\in S_{i,j}}A_q^z(\psi)\cap\bigcup_{q\in S_{k,\ell}}A_q^z(\psi)} \leq \parens*{1 + 2^{-k}}\mu\parens*{\bigcup_{q\in S_{k,\ell}}A_q^z(\psi)} \mu\parens*{\bigcup_{q\in S_{i,j}}A_q^z(\psi)}
  \end{equation}
  for all $(i,j) \prec (k,\ell)$. In particular, one may take
  $m_{1,1}=0$.

  \

  We now verify that $\psi$ fulfills the claims made
  in~\th~\ref{prescription}. Let $y\in Y$. Then $y=y_{k'}$ for some
  $k'\in\NN$. By~(\ref{eq:3}), we have
  \begin{equation*}
    \mu\parens*{\bigcup_{q\in S_{k,\ell}} A_q^y(\psi)} < 2^{-k}
  \end{equation*}
  for all $k\geq k'$, which implies that 
  \begin{equation*}
    \sum_{k=1}^\infty\sum_{\ell = 1}^k \mu\parens*{\bigcup_{q\in S_{k,\ell}} A_q^y(\psi)} < \infty. 
  \end{equation*}
  Now by the Borel--Cantelli lemma, 
  \begin{align*}
    \mu\parens*{\limsup_{\lex(k,\ell)\to \infty} \bigcup_{q\in S_{k,\ell}} A_q^y(\psi)} = 0.
  \end{align*}
  But
  \begin{equation*}
    \limsup_{\lex(k,\ell)\to \infty} \bigcup_{q\in S_{k,\ell}} A_q^y(\psi) = \limsup_{q\to \infty} A_q^y (\psi)  =   W(\psi, y),
  \end{equation*}
  therefore, $\mu(W(\psi, y))=0$, proving the first part of~\th~\ref{prescription}. 

  Let $z\in Z$. Then $z = z_{\ell'}$ for some $\ell'\in\NN$,
  and~(\ref{eq:4}) shows that we have
  \begin{equation}
    \mu\parens*{\bigcup_{q\in S_{k,\ell'}} A_q^z(\psi)} \geq \frac{1}{3}
  \end{equation}
  for all $k\geq \ell'$. By~(\ref{eq:quasiind}) and the Erd{\H
    o}s--Tur{\'a}n Lemma~\cite{ErdosRenyiBC}, we have for all
  $k_0\geq \ell'$ that
  \begin{align*}
    \mu\parens*{\limsup_{k\to \infty} \bigcup_{q\in S_{k,\ell'}} A_q^z(\psi)} \geq \frac{1}{1 + 2^{-k_0}}. 
  \end{align*}
  Since
  \begin{equation*}
    \limsup_{k\to \infty} \bigcup_{q\in S_{k,\ell'}} A_q^z(\psi) \subset \limsup_{q\to \infty} A_q^z (\psi)  =   W(\psi, z),
  \end{equation*}
  we have $\mu(W(\psi, z))\geq (1 + 2^{-k_0})^{-1}$.  Sending
  $k_0\to\infty$ gives $\mu(W(\psi, z)) = 1$.  This finishes the proof
  of~\th~\ref{prescription}.
\end{proof}

\section{Real residue systems}

We now turn our attention to real residue systems $R(\eps,\ba,\bn)$,
where $\eps>0$, $\bn\in\NN^\ell$, and $\ba\in\RR^\ell$, and to the proof of~\th~\ref{thm:rogersext}.

\begin{proof}[Proof of~\th~\ref{thm:rogersext}]
  Let $\eps$ and $\bn$ be as in the theorem's statement. Note that
  \begin{equation*}
      \mu(\eps,\ba,\bn) := \mu\parens*{R(\eps,\ba, \bn)\cap [0,N]},
  \end{equation*}
  where $N=\lcm(\bn)$, is a continuous function of $\ba$. Therefore,
  it suffices to establish~\eqref{eq:minimum} for all $\ba$ with
  dyadic rational coordinates.

  \

  We shall now argue by induction that
  $\mu(\frac{1}{2},\ba, \bn)\geq \mu(\frac{1}{2}, \bzero, \bn)$ holds
  for all $\ba\in 2^{-j}\ZZ^\ell$ with $j\geq 0$. Rogers's
  theorem~\cite[Page~242]{HalberstamRoth} gives the base case, namely,
  that $\mu(\frac{1}{2},\ba,\bn)\geq \mu(\frac{1}{2},\bzero, \bn)$ for
  all $\ba \in \ZZ^\ell$.
  
   Suppose it is known that
   $\mu(\frac{1}{2},\ba', \bn)\geq \mu(\frac{1}{2},\bzero,\bn)$ for all
   $\ba' \in 2^{-j}\ZZ^\ell$. Let $\ba$ be some fixed element of
   $2^{-j-1}\ZZ^\ell\setminus 2^{-j}\ZZ^\ell$, and let
   \begin{equation*}
     I = \set{i : a_i2^{j+1} \textrm{ is odd}}.
   \end{equation*}
   Then $I$ is nonempty.
    
   Let $e_I = \sum_{i\in I} e_i,$ where $e_i$ is the $i$-th standard
   unit vector. Then
   \begin{equation*}
     \ba - 2^{-j-1}e_I \in 2^{-j}\ZZ^\ell \qquad\textrm{and}\qquad \ba +2^{-j-1}e_I\in 2^{-j}\ZZ^\ell.
   \end{equation*}
   We will demonstrate that $\mu(\frac{1}{2}, \ba + te_I, \bn)$ is linear
   in $t$ over the interval $[-2^{-j-1},2^{-j-1}].$ It will follow by
   the induction hypothesis that
   \begin{equation*}
     \mu\parens*{\frac{1}{2},\ba, \bn} \geq \min\set*{\mu\parens*{\frac{1}{2},\ba-2^{-j-1}e_I,\bn}, \mu\parens*{\frac{1}{2},\ba+2^{-j-1}e_I, \bn}}\geq \mu\parens*{\frac{1}{2},\bzero, \bn}.
   \end{equation*}

   \

   Let 
   \begin{equation*}
     E:=\bigcup_{i\in I} \parens*{a_i+ \brackets*{-\frac{1}{2}, \frac{1}{2}} + n_i \ZZ},
   \end{equation*}
   and
   \begin{equation*}
     F:=\bigcup_{i\in \set{1, \dots, \ell}\setminus I} \parens*{a_i+ \brackets*{-\frac{1}{2}, \frac{1}{2}} + n_i \ZZ}.
   \end{equation*}
   Then
   \begin{multline*}
       \mu(1/2, \ba+te_I, \bn) = \mu(((E+t)\cup F)\cap [0,N]) \\
       = \mu((E+t) \cap [0,N]) + \mu(F\cap [0,N]) - \mu((E+t)\cap F\cap [0,N]).
   \end{multline*}
   Since $E$ is periodic with period $N,$ $\mu((E+t) \cap [0,N])$ is
   constant in $t.$ Thus, to establish the linearity of
   $\mu(1/2, \ba+te_I, \bn)$ over $[-2^{-j-1},2^{-j-1}]$ we need only
   consider how $\mu((E+t)\cap F\cap [0,N])$ changes with respect to
   $t.$

   Write
   \begin{equation*}
     E=\bigsqcup_{I\in\calI} I \qquad\textrm{and}\qquad F=\bigsqcup_{J\in\calJ} J
   \end{equation*}
   where $\calI$ and $\calJ$ are families of disjoint intervals having
   endpoints in $2^{-j-1}\ZZ\setminus 2^{-j}\ZZ$ and $2^{-j}\ZZ$,
   respectively. Then
   \begin{equation*}
     \mu((E+t)\cap F \cap [0,N]) = \bigsqcup_{\substack{I\in\calI \\ J \in \calJ}} \mu((I+t)\cap J\cap [0,N]).
   \end{equation*}
   For each $I,J$, $\mu((I+t)\cap J)$ is piecewise linear having 
   corners lying in $2^{-j-1}\ZZ$. It follows that each is linear in
   $[-2^{-j-1}, 2^{-j-1}]$, hence so are
   $\mu((E+t)\cap F\cap [0,N])$ and $\mu(((E+t)\cup F)\cap [0,N])$
   as claimed. This shows $\mu(1/2, \ba, \bn) \geq \mu(1/2, \bzero, \bn)$ for
   all $\ba\in 2^{-j-1}\ZZ^\ell$. By induction, the same is true for
   all $\ba$ having dyadic coordinates, and by continuity of $\mu$, we
   have $\mu(1/2,\ba, \bn)\geq \mu(1/2, \bzero, \bn)$ for all $\ba\in\RR^\ell$.

   Suppose now that $\eps = 2^{-k}$ for some $k\geq 2$, and
   $\ba\in\RR^\ell$. Then
   \begin{align*}
     \mu(\eps,\ba,\bn)
     &= \frac{1}{2^{k-1}}\mu\parens*{1/2,2^{k-1}\ba, 2^{k-1}\bn} \\
     &\geq \frac{1}{2^{k-1}}\mu\parens*{1/2,\bzero, 2^{k-1}\bn} \\
     &= \mu(\eps,\bzero, \bn),
   \end{align*}
   completing the proof.
 \end{proof}

 \section{Distribution of primes lying in Bohr sets}

A sequence $(x_n)_{n\in\NN}$ in a compact topological group $G$ is
said to be \emph{equidistributed} with respect to a Borel measure
$\mu$ if for every nonempty open set $I\subset G$, one has
\begin{equation*}
 \lim_{N\to\infty}\frac{1}{N}\sum_{n\leq N} \bone_I(x_n) =\mu(I).
\end{equation*}
One of the standard methods for establishing the equidistribution of
sequences is Weyl's criterion.

\begin{theorem}[{Weyl's criterion,~\cite[Corollary~5.2]{KuipersNiederreiter}}]
  Let $G$ be a compact abelian group. A sequence $x_n\in G$ is
  equidistributed with respect to Haar measure $\mu_G$ if and only if
  for every non-trivial periodic character $\chi:G\to \CC^\times$, we
  have
  \begin{equation*}
    \sum_{n\leq N}\chi(x_n) = o(N) 
  \end{equation*}
  as $N\to \infty$
\end{theorem}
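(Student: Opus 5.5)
We will prove the criterion by passing through the standard equivalent formulation with continuous test functions: $(x_n)$ is equidistributed with respect to $\mu_G$ if and only if
\begin{equation*}
  \frac{1}{N}\sum_{n\leq N} f(x_n)\to \int_G f\,d\mu_G \qquad\textrm{for every } f\in C(G).
\end{equation*}
Here ``character'' means a continuous homomorphism $G\to\CC^\times$. Since $G$ is compact, its image lies in the unit circle.

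\emph{Step 1: characters suffice to control all continuous functions.} Consider the linear functionals $\Lambda_N(f)=\frac1N\sum_{n\leq N}f(x_n)$ on $C(G)$; each has norm at most $1$. For a nontrivial character $\chi$ we have $\int_G\chi\,d\mu_G=0$ by translation invariance: choosing $g$ with $\chi(g)\neq 1$ gives $\int\chi=\chi(g)\int\chi$. For the trivial character both sides equal $1$. Thus the hypothesis says $\Lambda_N(\chi)\to\int\chi\,d\mu_G$ for every character, and hence by linearity for every trigonometric polynomial. The continuous characters of a compact abelian group separate points (Peter--Weyl, or Pontryagin duality), are closed under multiplication and complex conjugation, and include the constants. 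By Stone--Weierstrass, trigonometric polynomials are therefore uniformly dense in $C(G)$. An $\varepsilon/3$ argument using $\norm{\Lambda_N}\leq 1$ then gives $\Lambda_N(f)\to\int f\,d\mu_G$ for all $f\in C(G)$. The converse direction is immediate, since characters are continuous; once Step 2 identifies the two notions of equidistribution, this proves necessity.

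\emph{Step 2: from continuous functions to sets.} Let $I\subset G$ be open. By inner regularity of Haar measure and Urysohn's lemma, choose $f\in C(G)$ with $0\leq f\leq\bone_I$ and $\int f\,d\mu_G>\mu_G(I)-\delta$. This gives $\liminf_N\frac1N\sum_{n\leq N}\bone_I(x_n)\geq\mu_G(I)$. For the matching upper bound, approximate $\bone_{\overline I}$ from above by continuous functions, using outer regularity, to get $\limsup\leq\mu_G(\overline I)$. The two bounds agree whenever $\mu_G(\partial I)=0$. Conversely, if the set-counting statement holds for such open sets, then approximating $f\in C(G)$ uniformly by step functions built from sets of the form $\{f>c\}$, with $c$ chosen outside the countably many values at which $\mu_G(\{f=c\})>0$, recovers the continuous-function formulation. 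This completes the equivalence.

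\emph{Main obstacle.} The substantive input is Step 1, namely that characters separate points of $G$, and I would simply cite Peter--Weyl for it. For the tori $G=\TT^\ell$ and their closed subgroups, which are the only cases used later, this is elementary: the characters $e(\mathbf m\cdot\mathbf x)$ restricted from $\TT^\ell$ already separate points. The other delicate point is Step 2. Read literally, the definition with ``every nonempty open set'' is too strong: for instance, a dense open set of small measure containing every $x_n$ would violate it. I would therefore interpret the definition as ranging over open sets $I$ with $\mu_G(\partial I)=0$, which includes balls and boxes in tori, exactly as in~\cite{KuipersNiederreiter}. That reading is all the subsequent applications to Bohr sets require.
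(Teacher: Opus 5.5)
Your argument is correct. The paper does not prove this statement; it quotes it from Kuipers--Niederreiter. What you wrote is the standard proof behind that reference:
\begin{itemize}
\item define equidistribution by continuous test functions;
\item reduce to characters using density of trigonometric polynomials (characters separate points by Peter--Weyl, then apply Stone--Weierstrass);
\item pass between test functions and sets via open sets with $\mu_G$-null boundary.
\end{itemize}

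Your caveat about the definition is also correct and genuinely needed. Read literally (``every nonempty open set''), the definition is satisfied by no sequence in an infinite compact group. Indeed, choose open $U_n\ni x_n$ with $\mu_G(U_n)<2^{-n-2}$. Then $\bigcup_n U_n$ has measure less than $1/4$, yet it contains every $x_n$. So the ``if'' direction is true only under the continuity-set reading you adopt.

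One small refinement to your closing remark about the applications. There the relevant compact space is the subtorus $H_a$ (or $H_a\times\TT$), not $\TT^\ell$. What is needed is that $B(\bv,\eps)\cap H_a$ have $\mu_a$-null boundary \emph{relative to} $H_a$, which does hold for the balls in question. It is not enough that $B(\bv,\eps)$ be a continuity set in $\TT^\ell$.
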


\subsection{Equidistribution of toral translations}

It is well-known that if the real numbers $1, w_1, w_2, \dots w_\ell$
are linearly independent over $\QQ$, then the orbit
$\parens{n\bw \bmod 1}_{n\in\NN}$, where $\bw = (w_1, \dots, w_\ell)$,
equidistributes in the torus $\TT^\ell$ with respect to Lebesgue
measure, $\mu:=\mu_{\TT^\ell}$. More generally, the orbit of an
arbitrary vector $\bw\in\RR^\ell$ equidistributes in its orbit
closure,
\begin{equation}\label{eq:orbitclosure}
  H:=\overline{\set{n\bw\bmod 1 : n\in\NN}},
\end{equation}
which is a subgroup of $\TT^\ell$. Here, the equidistribution is with
respect to Haar measure, $\mu_H$, on $H$. This is known as the
Kronecker--Weyl theorem, and it can be proved by applying Weyl's
criterion.

Furthermore, the orbit closure $H$ is a union of disjoint affine
sub-tori,
\begin{equation}\label{eq:Hsubtori}
  H = H_0 \sqcup H_1 \sqcup \dots \sqcup H_{c-1},
\end{equation}
where $H_a = a\bw + H_0$, for $a=0, 1, \dots, c-1$ and $c$ is the size
of the torsion subgroup of $\TT^\ell/H$. The orbit of $\bw$ cycles
through the tori, with
\begin{equation}
  \label{eq:cycle}
  n\bw \bmod 1 \in H_{n \bmod c} \quad (n\in\NN).
\end{equation}
It follows, again via Weyl's criterion, that
$\parens{n\bw \bmod 1}_{n\equiv 0 \pmod c}$ is equidistributed in
$H_0$ with respect to normalized Haar measure
$\mu_0:=\mu_{H_0}$. Consequently,
$\parens{n\bw \bmod 1}_{n\equiv a \pmod c}$ is equidistributed in
$H_a$ with respect to the pushforward measure
\begin{equation}\label{eq:pushforward}
  \mu_a = T_*^a\mu_{H_0},
\end{equation}
where $T^a:H_0\to H_a$ is the mapping $\bx \mapsto \bx + a\bw \pmod 1$.

\subsection{Equidistribution along prime times}

The proof of the following result is usually attributed to
Vinogradov~\cite{Vinogradov} and can be found in work of
Vaughan~\cite{Vaughan} or Rhin~\cite{Rhin}.

\begin{theorem}\th\label{pyequidistribution}
  If $z\in\RR\setminus\QQ$, then $(pz \bmod 1)_{p \textrm{ prime}}$
  is equidistributed in $\TT = \RR/\ZZ$ with respect to Lebesgue measure.
\end{theorem}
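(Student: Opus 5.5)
Since equidistribution on $\TT$ is with respect to Lebesgue measure, we apply Weyl's criterion. The nontrivial characters are $x\mapsto e(hx)$ with $h\in\ZZ\setminus\set{0}$. Moreover $hz$ is irrational whenever $z$ is, so it suffices to show that
\begin{equation*}
  \sum_{p\leq X} e(p\theta) = o(\pi(X)) \qquad (X\to\infty)
\end{equation*}
for every irrational $\theta$. By partial summation it is equivalent to prove $\sum_{n\leq X}\Lambda(n)e(n\theta) = o(X)$, where $\Lambda$ is the von Mangoldt function. Prime powers contribute only $O(\sqrt{X}\log X)$, and the prime number theorem converts counting with weight $\log p$ back to counting primes.

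The key tool is Vaughan's identity. With parameters $U=V=X^{2/5}$, it decomposes $\sum_{n\leq X}\Lambda(n)e(n\theta)$ into three kinds of sums:
\begin{itemize}
  \item Type I sums, $\sum_{m\leq M} a_m\sum_{r\leq X/m} e(mr\theta)$ with $\abs{a_m}\ll \log X$;
  \item Type II (bilinear) sums, $\sum_{U<m\leq X/V}\sum_{r} a_m b_r\, e(mr\theta)$ with divisor-bounded coefficients;
  \item a trivial small piece.
\end{itemize}

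For the Type I sums, the inner geometric sum is $\ll \min(X/m, \norm{m\theta}^{-1})$. For the Type II sums, we split dyadically in $m$, apply Cauchy--Schwarz in $m$, expand the square, and reduce again to sums of $\min(X/m,\norm{m\theta}^{-1})$ over $m$ in a range.

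Next we use Dirichlet's theorem to choose $a/q$ with $\abs{\theta-a/q}\leq q^{-2}$. The standard lemma gives
\begin{equation*}
  \sum_{m\leq M}\min\parens*{\frac{X}{m},\frac{1}{\norm{m\theta}}} \ll \parens*{\frac{X}{q}+M+q}\log(qX).
\end{equation*}
Combined with Vaughan's identity, this yields Vinogradov's bound
\begin{equation*}
  \sum_{n\leq X}\Lambda(n)e(n\theta)\ll \parens*{Xq^{-1/2}+X^{4/5}+X^{1/2}q^{1/2}}\log^4 X.
\end{equation*}

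The main obstacle is turning this into $o(X)$ using only irrationality of $\theta$, since $q$ is not fixed. Fix $Q$ large. Because $\theta$ is irrational, its continued fraction denominators tend to infinity. For each large $X$ we therefore want a convergent $a/q$ with $Q\leq q\leq X/Q$ and $\abs{\theta-a/q}\leq q^{-2}$, but the right-hand side above only gives $o(X)$ if $Q$ exceeds a power of $\log X$.

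To handle this, apply Dirichlet approximation with parameter $X/(\log X)^{A}$. This gives $q\leq X(\log X)^{-A}$ with $\abs{\theta-a/q}\leq (\log X)^A/(qX)$. There are two cases:
\begin{itemize}
  \item If $q>(\log X)^{A}$, the bound above, in its minor-arc form with $(\log X)^A$ extra factors absorbed by choosing $A$ large, gives $o(X)$.
  \item If $q\leq(\log X)^A$, then $\theta$ is very close to $a/q$. Writing $e(n\theta)=e(na/q)e(n\beta)$ with $\abs{\beta}\leq (\log X)^A/(qX)$, we use Siegel--Walfisz in progressions modulo $q$. This gives a main term $\frac{\mu(q)}{\varphi(q)}\int_0^X e(t\beta)\,dt$ of size $\ll X/\varphi(q)$. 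Since $\theta$ is irrational, for fixed $q_0$ the approximations with $q\leq q_0$ eventually fail the closeness condition. Hence $q\to\infty$ along $X$, and $X/\varphi(q)=o(X)$.
\end{itemize}

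Together the two cases give the required $o(X)$, and Weyl's criterion finishes the proof.
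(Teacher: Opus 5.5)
The paper gives no proof of this statement. It only attributes it to Vinogradov and refers to Vaughan and Rhin, and your outline is a correct rendering of the standard Vaughan-identity proof from those sources. You also correctly see that Vinogradov's bound alone cannot give $o(X)$ for Liouville-type $\theta$, and your fix is the right one: Dirichlet approximation with $Q=X(\log X)^{-A}$, Siegel--Walfisz uniformly for $q\leq(\log X)^A$ on the major arcs, and $q\to\infty$ by irrationality. One small simplification: in your first case no absorption of extra $(\log X)^A$ factors is needed, since $q\leq Q$ already gives $\abs{\theta-a/q}\leq 1/(qQ)\leq q^{-2}$, so the bound reads $\ll X(\log X)^{4-A/2}+X^{4/5}(\log X)^4$ directly.
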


\th~\ref{pyequidistribution} is easily generalized to higher
dimensions: $\parens{p\bw\bmod 1}_{p\in\PP}$ is equidistributed in
$\TT^\ell$ if $\parens{n\bw\bmod 1}_{n\in\NN}$ is equidistributed in
$\TT^\ell$, that is, if $1, w_1, w_2, \dots, w_\ell$ are linearly
independent over $\QQ$. But extra care must be taken in the further
generalization to all $\bw\in\RR^\ell$. If $\gcd(a,c)>1$, then the set
$\set{n\in\NN : n\equiv a\pmod c}$ has at most one prime member,
therefore, by~(\ref{eq:cycle}), $\parens{p\bw\bmod 1}_{p\in\PP}$
cannot in general equidistribute---some subtori may be missed
entirely. On the other hand, if $\gcd(a,c)=1$, then by the prime
number theorem for arithmetic progressions, we have
\begin{equation}
  \label{eq:dirichletPNT}
  \#(\PP_a \cap [1,X]) \sim \frac{X}{\varphi(c)\log X}
\end{equation}
where
\begin{equation*}
  \PP_a := \set{p\in\PP : p\equiv a\pmod c}.
\end{equation*}
In the following proposition, we show that if $\gcd(a,c)=1$, then
$\parens{n\bw\bmod 1}$ equidistributes in $H_a$ along the prime
members of $\set{n\in\NN : n\equiv a\pmod c}$.

\begin{proposition}\th\label{prop:subtorusequid}
  Let $\bw\in \RR^\ell$ and $c$ the number of connected components of
  $\overline{\set{n\bw\bmod 1 : n\in\NN}}\subset\TT^\ell$. If
  $\gcd(a,c)=1$, then $\parens{p\bw\bmod 1}_{p\in\PP_a}$ is
  equidistributed with respect to $\mu_a$, the measure given
  by~(\ref{eq:pushforward}).
\end{proposition}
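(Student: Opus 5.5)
We will use Weyl's criterion on the group $H$, after translating to $H_0$. Write $\bx_p = p\bw - a\bw \bmod 1$ for $p\in\PP_a$. Then $\bx_p\in H_0$ by~(\ref{eq:cycle}), since $p-a\equiv 0\pmod c$. Equidistribution of $(p\bw)_{p\in\PP_a}$ with respect to $\mu_a=T^a_*\mu_{H_0}$ is equivalent to equidistribution of $(\bx_p)$ in $H_0$ with respect to $\mu_{H_0}$. Since $H_0$ is a subtorus, hence a compact abelian group, Weyl's criterion says it suffices to show that for every nontrivial character $\chi$ of $H_0$,
\begin{equation*}
  \sum_{p\in\PP_a,\ p\leq X}\chi(\bx_p) = o\parens*{\#(\PP_a\cap[1,X])}.
\end{equation*}
Every character of $H_0$ extends to a character of $\TT^\ell$, i.e.\ $\chi(\bx)=e(\bh\cdot\bx)$ for some $\bh\in\ZZ^\ell$ whose restriction to $H_0$ is nontrivial. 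So we must bound $\sum_{p\in\PP_a, p\leq X} e(p\,\bh\cdot\bw)$.

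The key step is to identify the number $\theta:=\bh\cdot\bw$. First, $\theta$ is not in $\frac1c\ZZ$. Indeed, $H_0$ is the closure of $\set{mc\bw : m\in\NN}$, so if $c\theta\in\ZZ$ then $\chi(mc\bw)=1$ for all $m$. This would force $\chi\equiv1$ on $H_0$, a contradiction. We then split into two cases.

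\emph{Case $\theta$ irrational.} We detect the congruence $p\equiv a\pmod c$ with additive characters:
\begin{equation*}
  \bone_{p\equiv a}=\frac1c\sum_{r=0}^{c-1}e\parens*{\frac{r(p-a)}{c}}.
\end{equation*}
This reduces the sum to finitely many sums $\sum_{p\leq X}e(p(\theta+r/c))$, each with irrational frequency $\theta + r/c$. By \th~\ref{pyequidistribution} (Vinogradov, via Weyl's criterion on $\TT$), each of these is $o(\pi(X))$. Since $\#(\PP_a\cap[1,X])\asymp \pi(X)/\varphi(c)$ by~(\ref{eq:dirichletPNT}), the whole sum is $o(\#(\PP_a\cap[1,X]))$.

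\emph{Case $\theta$ rational.} Write $\theta=b/d$ in lowest terms. Then $e(m\theta)$ restricted to multiples $m$ of $c$ is constant only if $d\mid c$. Here I expect the main obstacle: we must show that $d\nmid c$ cannot occur, i.e.\ that rational $\theta$ forces $d\mid c$. The intended argument uses the structure of $H$. The character $\bx\mapsto e(\bh\cdot\bx)$ restricted to $H$ takes values in the $d$-th roots of unity, because $H$ is the closure of $\ZZ\bw$ and $e(n\theta)$ is a $d$-th root of unity. So its kernel is an open subgroup of $H$ of index $d$, and it must contain the identity component $H_0$. Hence $\chi$ is trivial on $H_0$, contradicting the choice of $\chi$. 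So the rational case does not occur at all: a nontrivial character of $H_0$ always gives an irrational $\theta$. (The earlier observation $\theta\notin\frac1c\ZZ$ is then subsumed.) The care needed is exactly the bookkeeping that $H_0$ is connected and that characters of $H$ trivial on $H_0$ are the ones factoring through $H/H_0\cong\ZZ/c\ZZ$.

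Combining the cases, Weyl's criterion gives equidistribution of $(\bx_p)$ in $H_0$. Translating by $a\bw$ gives the claimed equidistribution of $(p\bw\bmod 1)_{p\in\PP_a}$ with respect to $\mu_a$.
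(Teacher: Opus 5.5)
Your proposal is correct and follows essentially the same route as the paper: translate to $H_0$, apply Weyl's criterion with characters $\bx\mapsto e(\bh\cdot\bx)$ where $\bh\cdot\bw$ is irrational, detect the congruence $p\equiv a\pmod c$ with additive characters, and conclude via Vinogradov's theorem together with the prime number theorem for arithmetic progressions. Your open-kernel argument excluding rational $\theta$ is a more explicit version of the paper's remark that a nontrivial character maps the connected subtorus $H_0$ onto all of $\TT$.
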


\begin{proof}
  The claim is equivalent to $\parens{(p-a)\bw\bmod 1}_{p\in \PP_a}$
  equidistributing with respect to $\mu_0$. By Weyl's criterion, this
  will follow once it is shown that for every nontrivial character
  $\chi:H_0 \to \CC$, we have
  \begin{equation}\label{eq:9}
    \sum_{\substack{p\leq X \\ p\equiv a\pmod c}} \chi((p-a) \bw) = o(\#\parens*{(\PP_a-a)\cap[1,X]}).
  \end{equation}
  A character $\chi$ is non-trivial if and only if $\chi(H_0)=\TT$,
  which, since $\set{n\bw\bmod 1 : n\in\NN}\cap H_0$ is dense in
  $H_0$, is equivalent to
  $\overline{\set{\chi(n\bw) : n\in\NN}} = \TT$. Such characters are
  of the form $\bx\mapsto e(\inner{\bh, \bx})$ where $\bh\in \ZZ^\ell$
  and $\inner{\bh,\bw}$ is irrational. Now
  \begin{align*}
    \sum_{\substack{p\leq X \\ p\equiv a\pmod c}} \chi((p-a) \bw)
    &=\sum_{\substack{p\leq X \\ p\equiv a\pmod c}} e(\inner{\bh,(p-a)\bw})\\
    &=\sum_{p\leq X }\bone_{\set{a\pmod c}}(p) e((p-a)\inner{\bh,\bw})\\
    &=\sum_{p\leq X }\sum_{k=0}^ce\parens*{(p-a)\frac{k}{c}} e((p-a)\inner{\bh,\bw})\\
    &=\sum_{k=0}^c e\parens*{-a\parens*{\frac{k}{c} + \inner{\bh,\bw}}}\underbrace{\sum_{p\leq X }e\parens*{p\parens*{\frac{k}{c}+\inner{\bh,\bw}}}}.
  \end{align*}
  Since $k/c + \inner{\bh,\bw}$ is
  irrational,~\th\ref{pyequidistribution} and Weyl's criterion imply
  that the underbraced sum is $o(\pi(X))$, where
  \begin{equation*}
    \pi(X) = \#(\PP\cap [1,X]). 
  \end{equation*}
  Therefore,
  \begin{align*}
    \sum_{\substack{p\leq X \\ p\equiv a\pmod c}} \chi((p-a) \bw) = o(\pi(X)).
  \end{align*}
  Finally, by the Siegel--Walfisz prime number theorem for arithmetic
  progressions, we have
  \begin{equation*}
    \#\parens*{(\PP_a-a)\cap[1,X]} \sim \frac{1}{\varphi(c)}\pi(X),
  \end{equation*}
  so~(\ref{eq:9}) follows and the proof is finished.
\end{proof}

\subsection{The prime number theorem for Bohr sets}

We can now state the proof of~\th~\ref{DirichletPNT}. 

\begin{proof}[Proof of~\th~\ref{DirichletPNT}]
  Let $\bw,\bv\in\RR^\ell$ and $\eps>0$. Note that the renormalized
  restriction of $\mu_H$ to
  \begin{equation*}
    H^\times = \overline{\set{n\bw\bmod 1 : \gcd(n,c)=1}}
  \end{equation*}
  is given by
  \begin{equation}\label{eq:mutimes}
    \mu^\times = \frac{1}{\varphi(c)}\sum_{\substack{a=0 \\ \gcd(a,c)=1}}^{c-1}\mu_a. 
  \end{equation}
  Now
  \begin{multline*}
    \#(\PP\cap N_\bv(\bw,\eps)\cap [1,X])
    = \sum_{\substack{p\leq X \\ p \textrm{ prime}}} \bone_{B(\bv,\eps)}(p\bw\bmod 1) \\
    = \sum_{\substack{a=0 \\ \gcd(a,c)=1}}^{c-1}\sum_{p\in \PP_a\cap [1,X]} \bone_{B(\bv,\eps)}(p\bw\bmod 1)\\
     + \underbrace{\sum_{\substack{a=0 \\ \gcd(a,c)>1}}^{c-1}\sum_{p\in \PP_a\cap [1,X]} \bone_{B(\bv,\eps)}(p\bw\bmod 1)}_{O(1)}.
  \end{multline*}
  If $B(\bv,\eps)\cap H^\times \neq \emptyset$, then
  $\mu^\times(B(\bv,\eps))>0$ and by~\th~\ref{prop:subtorusequid} the
  above gives
    \begin{align*}
    \#(\PP\cap N_\bv(\bw,\eps)\cap [1,X])
    &\sim \sum_{\substack{a=0 \\ \gcd(a,c)=1}}^{c-1}\#(\PP_a\cap [1,X])\mu_a(B(\bv,\eps)) \\
    &\sim \mu^\times(B(\bv, \eps))\frac{X}{\log X},
  \end{align*}
  by the prime number theorem for arithmetic
  progressions~(\ref{eq:dirichletPNT}) and~(\ref{eq:mutimes}).

  On the other hand, if $B(\bv,\eps)\cap H^\times = \emptyset$, then
  \begin{align*}
    \#(\PP\cap N_\bv(\bw,\eps)\cap [1,X])
    &= \sum_{\substack{a=0 \\ \gcd(a,c)>1}}^{c-1}\sum_{p\in \PP_a\cap [1,X]} \bone_{B(\bv,\eps)}(p\bw\bmod 1) \\
    &<\infty. 
  \end{align*}
  This proves the theorem.
\end{proof}

\subsection{Equidistribution along primes in Bohr sets}

One can use \th~\ref{prop:subtorusequid} to quickly
generalize~\th~\ref{pyequidistribution} to primes lying in arithmetic
progressions $\set{n\in \NN : n\equiv a\pmod c}$ with
$\gcd(a,c)=1$. Given $z\in\RR\setminus\QQ$, put $\bw=(y,z)$, where
$y=1/c$, so that
\begin{equation}\label{eq:subtori}
  \overline{\set{n\bw\bmod 1 : n\in\NN}} = H_0\sqcup \dots\sqcup H_{c-1},
\end{equation}
and
\begin{equation*}
  H_a = \set{ay \bmod 1} \times \TT = \set*{a/c} \times \TT \qquad (a=0,\dots, c-1).
\end{equation*}
\th~\ref{prop:subtorusequid} says that
$\parens{p\bw\bmod 1}_{p\in\PP_a}$ is equidistributed in $H_a$, which
now is equivalent to the equidistribution of
$\parens{pz\bmod 1}_{p\in\PP_a}$ in $\TT$.

Notice that the arithmetic progression
$\set{n\in \NN : n\equiv a\pmod c}$ can be seen as the Bohr set
\begin{equation*}
  N_{ay}(y,\eps) = \set{n\in\NN : \norm{ny-a/c} < \eps}
\end{equation*}
for any $0 < \eps < 1/c$. \th~\ref{thm:pbohrequid}, which we can now
prove, generalizes~\th~\ref{pyequidistribution} to primes lying in
Bohr sets $N_\bv(\by,\eps)$. As with the generalization to arithmetic
progressions, it is required that the Bohr set actually contains
infinitely many primes. The irrationality assumption is replaced by
$z\notin\Span_\QQ\set{1,y_1, \dots, y_k}$.

\begin{proof}[Proof of~\th~\ref{thm:pbohrequid}]
  Suppose $\by,\bv\in\RR^k$ and $\eps>0$ are such that
  \begin{equation*}
    B(\bv,\eps)\cap \overline{\set{n\by\bmod 1 : \gcd(n,c)=1}}\neq \emptyset,
  \end{equation*}
  where $c$ is the number of connected components in
  \begin{equation*}
    \overline{\set{n\by\bmod 1 : n\in\NN}} = H_0 \sqcup\dots\sqcup H_{c-1}.
  \end{equation*}
  Let $z\in \RR\setminus\Span_\QQ\set{1, y_1, \dots, y_k}$. It is claimed
  that $\parens{pz\bmod 1}_{p\in \PP\cap N_\bv(\by,\eps)}$ is
  equidistributed in $\TT$.

  Let $I\subset\TT$ be an interval. Put $\bw = (\by, z)$. Since
  $z\notin \Span_\QQ\set{1, y_1, \dots, y_k}$, we have
  \begin{equation*}
    \overline{\set{n\bw\bmod 1 : n\in \NN}} = \overline{\set{n\by\bmod 1 : n\in \NN}} \times \TT,
  \end{equation*}
  and~(\ref{eq:subtori}) may be written as
  \begin{equation*}
    \overline{\set{n\bw\bmod 1 : n\in\NN}} = (H_0\times\TT) \sqcup\dots\sqcup (H_{c-1}\times\TT)
  \end{equation*}
  Therefore,
  \begin{equation*}
    \parens*{B(\bv,\eps)\times I}\cap \overline{\set{n\bw \bmod 1 : n\in\NN}} = \parens*{\bigcup_{a=0}^{c-1}(B(\bv,\eps)\cap H_a)}\times I.
  \end{equation*}
  Note that
  \begin{equation*}
    \PP\cap N_\bv(\by, \eps) = \bigcup_{a=0}^{c-1}(\PP_a \cap N_\bv(\by, \eps)), 
  \end{equation*}
  hence 
  \begin{equation*}
    \#(\PP\cap N_\bv(\by, \eps)\cap[1,X]) = \sum_{a=0}^{c-1}\#(\PP_a \cap N_\bv(\by, \eps)\cap [1,X]).
  \end{equation*}
  Since, by~\th~\ref{DirichletPNT}, $N_\bv(\by,\eps)$ contains infinitely
  many primes, it follows that the set of $a\in \set{0, \dots, c-1}$
  for which $\#\parens*{\PP_a\cap N_\bv(\by,\eps)} = \infty$ is nonempty. For
  all such $a$, one has $\gcd(a,c)=1$, and it follows that
  \begin{equation}\label{eq:count}
    \#(\PP\cap N_\bv(\by, \eps)\cap[1,X]) \sim \sum_{\substack{a=0 \\ \gcd(a,c)=1}}^{c-1}\#(\PP_a \cap N_\bv(\by, \eps)\cap [1,X])
  \end{equation}
  as $X\to\infty$.

  Now by~\th~\ref{prop:subtorusequid}, whenever $\gcd(a,c)=1$ we have
  that $(p\bw\bmod 1)_{p\in\PP_a}$ is equidistributed in
  $H_a \times \TT$ with respect to $\mu_a\times\mu_\TT$, where
  $\mu_a$ is a pushforward measure as
  in~(\ref{eq:pushforward}). Therefore,
  \begin{align}
    \sum_{\substack{p\leq X \\ p\in \PP\cap N_\bv(\by, \eps)}}\bone_I(p z \bmod 1)
    &= \sum_{a=0}^{c-1}\sum_{\substack{p\leq X \\ p\in \PP_a\cap N_\bv(\by, \eps)}}\bone_I(p z \bmod 1) \nonumber\\
    &= \sum_{a=0}^{c-1} \sum_{p\in \PP_a\cap[1,X]}\bone_{\brackets*{B(\bv,\eps)\times I}}(p (\by,z) \bmod 1) \nonumber \\
    &\sim  \sum_{\substack{a=0\\\gcd(a,c)=1}}^{c-1} \#\parens*{\PP_a\cap[1,X]} (\mu_a\times \mu_{\TT})(B(\bv,\eps)\times I) \label{eq:comeback} \\
    &=  \sum_{\substack{a=0\\\gcd(a,c)=1}}^{c-1}\#\parens*{\PP_a\cap[1,X]} \mu_{H_a}(B(\bv, \eps)) \mu_\TT(I) \nonumber \\
    &\sim \mu_\TT(I)\sum_{\substack{p\leq X \\ p\in \PP\cap N_\bv(\by, \eps)}} 1, \nonumber
  \end{align}
  again by~\th~\ref{prop:subtorusequid} and~(\ref{eq:count}). In other words, 
  \begin{equation*}
\sum_{\substack{p\leq X \\ p\in \PP\cap N_\bv(\by, \eps)}}\bone_I(p z \bmod 1)
    \sim \mu_\TT(I)\#(\PP\cap N_\bv(\by, \eps) \cap [1,X])
  \end{equation*}
  as $X\to\infty$, which is the claimed equidistribution.
\end{proof}

The following lemma generalizes a result of Dirichlet. 

\begin{lemma}\th\label{lem:1/p}
  Let $\by\in \RR^k$, $\eps>0$, and let $I\subset \RR/\ZZ$ be an
  interval. Then for $z\in \RR\setminus\Span_\QQ\set{1,y_1, \dots, y_k}$,
  \begin{equation*}
    \sum_{\substack{p\in \PP\cap N_\by(\by,\eps)\\ pz \in I}}\frac{1}{p} = \infty.
  \end{equation*}
\end{lemma}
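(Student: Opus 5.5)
The plan is to combine \th~\ref{DirichletPNT} with \th~\ref{thm:pbohrequid} and then finish by partial summation. The Bohr set in the statement is $N_\by(\by,\eps)$, centered at $\bv=\by$. Its ball $B(\by,\eps)$ contains the point $1\cdot\by \bmod 1$, and this point lies in $H^\times=\overline{\set{n\by\bmod 1:\gcd(n,c)=1}}$ because $\gcd(1,c)=1$. Hence $B(\by,\eps)\cap H^\times\neq\emptyset$, which is exactly the hypothesis required by both theorems. This is the whole reason for choosing the center $\by$.

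The first step is to apply \th~\ref{DirichletPNT}(a). Since $B(\by,\eps)$ is open and meets the support $H^\times$ of $\mu^\times$, the constant $\kappa:=\mu^\times(B(\by,\eps))$ is positive. The theorem then gives
\begin{equation*}
  \#(\PP\cap N_\by(\by,\eps)\cap[1,X])\sim \kappa\,\frac{X}{\log X}.
\end{equation*}

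The second step is to apply \th~\ref{thm:pbohrequid}. Since $z\notin\Span_\QQ\set{1,y_1,\dots,y_k}$, the sequence $(pz)_{p\in\PP\cap N_\by(\by,\eps)}$ is equidistributed modulo one. Consequently
\begin{equation*}
  \pi_I(X):=\#\set{p\leq X: p\in \PP\cap N_\by(\by,\eps),\ pz\in I}\sim \mu(I)\,\kappa\,\frac{X}{\log X}.
\end{equation*}
This uses the implicit assumption that $I$ is a nondegenerate interval, so $\mu(I)>0$; for a degenerate interval the statement would be false.

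The final step is partial summation:
\begin{equation*}
  \sum_{p\leq X,\ \ldots}\frac1p=\frac{\pi_I(X)}{X}+\int_2^X\frac{\pi_I(t)}{t^2}\,dt.
\end{equation*}
For $t$ large we have $\pi_I(t)\geq \tfrac12\mu(I)\kappa\, t/\log t$, so the integrand is at least a positive constant times $1/(t\log t)$. Since $\int^X dt/(t\log t)=\log\log X\to\infty$, the sum diverges.

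The only real point to check is that the hypotheses of the two earlier theorems hold, namely that $B(\by,\eps)$ meets $H^\times$ with positive $\mu^\times$-measure. This follows from the choice $\bv=\by$ together with the fact that $\mu^\times$ is Haar measure on a finite union of subtori with full support. The remaining steps are routine.
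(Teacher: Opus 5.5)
Your proof is correct and is essentially the paper's argument: you note that the point $\by \bmod 1$ itself shows $B(\by,\eps)\cap H^\times\neq\emptyset$, you combine Theorem~\ref{DirichletPNT} with Theorem~\ref{thm:pbohrequid} to get $\pi_I(X)\sim \mu(I)\,\mu^\times(B(\by,\eps))\,X/\log X$, and you conclude by partial summation against $\sum 1/(n\log n)$. The only differences are cosmetic: you use the integral form of Abel summation where the paper uses the discrete one, and you state explicitly the paper's tacit assumption that $I$ is nondegenerate.
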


\begin{proof}
  Let $c$ be the number of connected components of
  \begin{equation*}
    \overline{\set{n\by\bmod 1 : n\in\NN}}.
  \end{equation*}
  Notice that 
  \begin{equation*}
    B(\by,\eps)\cap \overline{\set{n\by\bmod 1 : \gcd(n,c)=1}}\neq \emptyset,
  \end{equation*}
  which implies that $\mu^\times(B(\by,\eps))>0$. Since
  $z \in \RR\setminus\Span_\QQ\set{1, y_1, \dots, y_k}$, we have
  by~\th~\ref{thm:pbohrequid,DirichletPNT} that
  \begin{align}
    \widehat\pi(X)
    &:=\sum_{\substack{p\leq X \\ p\in\PP\cap N_\by(\by,\eps)}}\bone_I(p z\,(\bmod\, 1)) \nonumber\\
   &\sim \#\parens*{\PP\cap N_\by(\by,\eps)\cap [1,X]} \mu(I) \nonumber \\
    &\sim \mu^\times(B(\by,\eps)) \frac{X}{\log X}\mu(I). \label{eq:8}
  \end{align}
  Now
  \begin{align*}
    \sum_{\substack{p\leq X \\ p\in \PP\cap N_\by(\by,\eps)\\ pz \in I}}\frac{1}{p}
    &= \sum_{n\leq X} (\widehat\pi(n) - \widehat\pi(n-1))\frac{1}{n} \\
    &= -\widehat\pi(1)\frac{1}{2} + \sum_{n < X} \widehat\pi(n)\parens*{\frac{1}{n} - \frac{1}{n+1}}  + \frac{\widehat\pi(X)}{X}\\
    &\asymp \sum_{n <  X} \frac{1}{n \log n}
  \end{align*}
  by~(\ref{eq:8}). This diverges as $X\to\infty$.
\end{proof}

\section{Proof of~\th~\ref{modestmeasureofunions}}\label{sec:proof-th-refm}

\begin{lemma}\th\label{lem:smallmeasure}
  Let $\by = (y_1, \dots, y_k)\in \RR^k$. Suppose
  $p\in N_\by(\by, \eps/2)$ and $p|P$. Then for each
  $y\in \set{y_1, \dots, y_k}$,
  \begin{equation}\label{eq:containment}
    A_{q}^y\parens*{\frac{\eps}{2p}} \subset A_{P}^y\parens*{\eps},
  \end{equation}
  where $q = P/p$.
\end{lemma}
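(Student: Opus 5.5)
The plan is a direct triangle-inequality computation, with one point about the integrality of $q$ that needs care.

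Fix $y\in\set{y_1,\dots,y_k}$ and $x\in A_q^y(\eps/(2p))$, where $q = P/p$. By definition,
\begin{equation*}
  \norm{qx - y} < \frac{\eps}{2p}.
\end{equation*}
The aim is to show $\norm{Px - y} < \eps$.

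First, multiply the approximation by the integer $p$. Since $\norm{p\theta}\leq p\norm{\theta}$ for any real $\theta$ and any $p\in\NN$, and since $qp = P$, we get
\begin{equation*}
  \norm{Px - py} \leq p\norm{qx - y} < \frac{\eps}{2}.
\end{equation*}

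Second, use the Bohr set hypothesis. The assumption $p\in N_\by(\by,\eps/2)$ means $\norm{p\by - \by}<\eps/2$ in $\TT^k$. The paper's $\norm{\cdot}$ on $\RR^k$ is the distance to $\ZZ^k$; I take it to be the sup-norm distance, or any norm dominating each coordinate. Then each coordinate satisfies $\norm{py_i - y_i} < \eps/2$, so in particular $\norm{py - y}<\eps/2$ for our $y$.

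Third, combine the two bounds with the triangle inequality for $\norm{\cdot}$ on $\RR/\ZZ$:
\begin{equation*}
  \norm{Px - y} \leq \norm{Px - py} + \norm{py - y} < \frac{\eps}{2}+\frac{\eps}{2} = \eps,
\end{equation*}
which gives $x\in A_P^y(\eps)$. This proves~\eqref{eq:containment}.

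There is no real obstacle here. The only points needing care are the two just noted. The scaling inequality $\norm{p\theta}\le p\norm{\theta}$ requires $p$ to be an integer, and $q = P/p$ is an integer because $p\mid P$. The coordinatewise reading of the Bohr condition holds for the sup-norm, and if the Euclidean distance is intended instead it is only stronger.
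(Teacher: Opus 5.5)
Your proof is correct and is essentially the paper's argument, phrased pointwise rather than in terms of interval centers. The paper shows that each constituent interval of $A_q^y(\eps/(2p))$, of radius $\eps/(2P)$, is centered within $\frac{1}{P}\norm{(p-1)y}<\frac{\eps}{2P}$ of the center of an interval of $A_P^y(\eps)$, which has radius $\eps/P$; after rescaling by $P$, this is exactly your combination of $\norm{Px-py}<\eps/2$ with $\norm{py-y}<\eps/2$.
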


\begin{proof}
  The set $A_{q}^y\parens*{\frac{\eps}{2p}}$ is a union of intervals
  of length $\eps/P$, and $A_{P}^y\parens*{\eps}$ is a union of
  intervals of length $2\eps/P$. Therefore, the
  containment~(\ref{eq:containment}) will follow upon showing that
  each constituent interval of $A_{q}^y\parens*{\frac{\eps}{2p}}$ is
  centered at a point that is within $\eps/2P$ of a center of an
  interval from $A_P^y(\eps)$. To that end, let $a\in \ZZ$. The
  distance from $(a+y)/q$ to the set of centers of the constituent
  intervals of $A_P^y(\eps)$ is
  \begin{align*}
    \min_{b\in \ZZ}\abs*{\frac{a+y}{q} - \frac{b+y}{P}}
    &= \frac{1}{P}\min_{b\in \ZZ}\abs{(p-1)y + pa-b} \\
    &= \frac{1}{P}\norm{(p-1)y} < \frac{\eps}{2P}.
  \end{align*}
  The lemma follows.
\end{proof}

\th~\ref{lem:smallmeasure} will be used to establish~(\ref{eq:5}). The
main substance of the proof of~\th~\ref{modestmeasureofunions} is in
showing that for $X>1$ large, the primes $p\leq X$ with
$p\in N_\by(\by, \eps/2)$ satisfy~(\ref{eq:6}).

For this, we partition $\RR/\ZZ$ into arcs of length $\eps> 0$ so that
there are $\sim 1/\eps$ of them:
\begin{equation*}
  \RR/\ZZ = I_1 \cup I_2 \cup \dots \cup I_{\floor{1/\eps}},
\end{equation*}
where the $I_j$ are disjoint intervals of length at least $\eps$. 
\begin{lemma}\th\label{lem:avoidance}
  If $pz\in I_i$ and $p'z\in I_j$ for some $i\neq j$ and
  $I_i,I_j$ non-adjacent, then
  \begin{equation*}
    A_{P/p}^z\parens*{\frac{\eps}{2p}} \cap   A_{P/p'}^z\parens*{\frac{\eps}{2p'}}  = \emptyset,
  \end{equation*}
  where $P$ is a common multiple of $p, p'$.
\end{lemma}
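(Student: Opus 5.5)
The plan is to describe both sets explicitly as unions of intervals and compare their centers modulo $1/P$. With $q=P/p$, the set $A_q^z(\eps/(2p))$ consists of $x$ with $\norm{qx-z}<\eps/(2p)$. Equivalently, $x$ lies within $\eps/(2pq)=\eps/(2P)$ of a point $(a+z)/q$ with $a\in\ZZ$. So it is a union of open intervals of half-length $\eps/(2P)$ centered at the points $(a+z)/q = (pa+pz)/P$. Likewise, with $q'=P/p'$, the set $A_{q'}^z(\eps/(2p'))$ is a union of intervals of half-length $\eps/(2P)$ centered at $(p'b+p'z)/P$ with $b\in\ZZ$.

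An intersection point exists only if some pair of centers is at distance less than $\eps/P$. Multiplying by $P$, this would require $a,b\in\ZZ$ with
\begin{equation*}
  \abs{pa + pz - p'b - p'z} < \eps .
\end{equation*}
Since $pa-p'b$ is an integer, this gives $\norm{pz-p'z}<\eps$. So it suffices to show that $\norm{pz-p'z}\geq\eps$ whenever $pz\in I_i$ and $p'z\in I_j$ with $I_i,I_j$ distinct and non-adjacent arcs.

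For that last step I use the structure of the partition. The arcs are disjoint, they cover $\RR/\ZZ$, and each has length at least $\eps$. Two non-adjacent arcs are therefore separated, in both directions around the circle, by at least one whole arc. Hence the circular distance between any point of $I_i$ and any point of $I_j$ is at least $\eps$. That distance is exactly $\norm{pz-p'z}$, which contradicts the strict inequality above, so the intersection is empty.

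The only point needing care is the boundary convention: whether the arcs are half-open, and the fact that the inequality must be strict. Since the sets $A$ are defined with strict inequality and the separating arc has length at least $\eps$, the non-strict bound $\norm{pz-p'z}\geq\eps$ is enough. Note also that the divisibility $p,p'\mid P$ is used only to write both families of centers over the common denominator $P$.
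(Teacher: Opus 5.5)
Your proposal is correct and follows essentially the same route as the paper: both sets are unions of intervals of half-length $\eps/(2P)$ centered at $p(a+z)/P$ and $p'(b+z)/P$, so a common point would force $\norm{(p-p')z}<\eps$, contradicting the bound $\norm{(p-p')z}\geq\eps$ that non-adjacency provides. The only difference is that you spell out why non-adjacency gives this bound: a whole arc of length at least $\eps$ lies between $I_i$ and $I_j$ in each direction around the circle. The paper simply asserts this step.
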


\begin{proof}
  The non-adjacency of $I_i$ and $I_j$ implies that
  \begin{equation}
    \label{eq:nonadjacency}
    \norm{(p - p')z}\geq \eps.
  \end{equation}
  Meanwhile, the sets $A_{P/p}^z$ and $A_{P/p}^z$ intersect if and
  only if there exist integers $a,a'$ such that
  \begin{equation*}
    \abs*{\frac{a+z}{P/p} - \frac{a'+z}{P/p'}} < \frac{\eps}{P},
  \end{equation*}
  or, equivalently, if 
  \begin{equation*}
    \abs*{p'(a+z) - p(a' + z)} < \eps.
  \end{equation*}
  This last inequality implies that $\norm{(p - p')z} < \eps$,
  which is at odds with~(\ref{eq:nonadjacency}).
\end{proof}

\begin{proof}[Proof of~\th~\ref{modestmeasureofunions}]
  Let $0 < \eps < 1$ be a power of $2$, so that
  $\floor{1/\eps}=1/\eps$ is an even number. Note that no generality
  is lost in this restriction. For $X>1$,
  let
  \begin{equation*}
    P = \prod_{\substack{p \leq X \\ p\in\PP\cap N_\by(\by, \eps/2)}}p,
  \end{equation*}
  and write $q=P/p$. Note that for every $w\in\RR$, 
  \begin{equation*}
    \mu\parens*{A_q^w\parens*{\frac{\eps}{2p}}} = \frac{1}{P}\mu\parens*{R(\eps, w\bp, \bp)\cap [0,P]},
  \end{equation*}
  where $\bp$ is the vector whose entries are the prime divisors of
  $P$. Therefore, by~\th~\ref{thm:rogersext}, we have
  \begin{equation*}
    \mu\parens*{\bigcup_{\substack{p \leq X \\ p\in\PP\cap N_\by(\by,\eps/2) \\ p z \in I_j}} A_{q}^z\parens*{\frac{\eps}{2p}}}
    \geq \mu\parens*{\bigcup_{\substack{p \leq X \\ p\in \PP\cap N_\by(\by,\eps/2) \\ p z \in I_j}} A_{q}^0\parens*{\frac{\eps}{2p}}}.
  \end{equation*}
  By inclusion/exclusion, 
  \begin{align*}
    \mu\parens*{\bigcup_{\substack{p \leq X \\ p\in \PP\cap N_\by(\by,\eps/2) \\ p z \in I_j}} A_{q}^0\parens*{\frac{\eps}{2p}}}
    &= \eps \parens*{1 -  \prod_{\substack{p \leq X \\ p\in \PP\cap N_\by(\by,\eps/2) \\ pz \in I_j}}\parens*{1 - \frac{1}{p}}}.
  \end{align*}
  Note that
  \begin{align*}
    \prod_{\substack{p \leq X \\ p\in \PP\cap N_\by(\by,\eps/2) \\ pz \in I_j}}\parens*{1 - \frac{1}{p}}
    &=  \exp\sum_{\substack{p \leq X \\ p\in \PP\cap N_\by(\by,\eps/2) \\pz \in I_j}}\log\parens*{1 - \frac{1}{p}} \\
    &=  \exp\sum_{\substack{p \leq X \\ p\in \PP\cap N_\by(\by,\eps/2) \\ pz \in I_j}}\parens*{- \frac{1}{p} + O(p^{-2})} \\
    &= \exp\brackets*{-\sum_{\substack{p \leq X \\ p\in\PP\cap N_\by(\by,\eps/2) \\pz \in I_j}}\frac{1}{p}}e^{O(1)} \\
    &\leq \frac{1}{3} \qquad(X\gg 1),
  \end{align*}
  by~\th~\ref{lem:1/p}. In other words, by choosing $X>1$ sufficiently
  large, we ensure
  \begin{align*}
    \mu\parens*{\bigcup_{\substack{p \leq X \\ p \in \PP\cap N_\by(\by, \eps/2) \\ p z \in I_j}} A_{q}^z\parens*{\frac{\eps}{2p}}}
    &\geq \frac{2}{3} \eps
  \end{align*}
  for every $j=1, \dots, \floor{1/\eps} = 1/\eps$. Now, by~\th~\ref{lem:avoidance}, we have
  \begin{align*}
    \mu\parens*{\bigcup_{\substack{p \leq X \\ p\in \PP\cap N_\by(\by,\eps/2)}} A_{q}^z\parens*{\frac{\eps}{2p}}}
    &\geq\sum_{\substack{j=2 \\ j \textrm{ even}}}^{1/\eps} \mu\parens*{\bigcup_{\substack{p \leq X \\ p\in \PP\cap N_\by(\by,\eps/2) \\ p z \in I_j}} A_{q}^z\parens*{\frac{\eps}{2p}}} \\
    &\geq \parens*{\frac{1}{2\eps}}\parens*{\frac{2\eps}{3}} = \frac{1}{3}
  \end{align*}
  as required. This settles~(\ref{eq:6}). 

  Meanwhile, by~\th~\ref{lem:smallmeasure},  
  \begin{equation*}
    \mu\parens*{\bigcup_{\substack{p \leq X \\ p\in \PP\cap N_\by(\by,\eps/2)}} A_{q}^y\parens*{\frac{\eps}{2p}}}
    \leq \mu\parens*{A_P^y(\eps)} = 2\eps,
  \end{equation*}
  for every $y\in \set{y_1, \dots, y_k}$, which
  establishes~(\ref{eq:5}) and finishes the proof.
\end{proof}

\appendix

\section{On subsets whose reciprocal sums diverge \\ by Manuel
  Hauke}\label{sec:manuel-hauke}

\th~\ref{lem:1/p} motivates the following question, stated
as~\th~\ref{ntproblem} in Section~\ref{sec:furth-disc-quest}:
\emph{Does every positive-density set $A$ contain a subset
  $B\subset A$ such that}
\begin{equation}\label{prop_B}
  \sup_{\substack{n,m \in B\\n \neq m}}\gcd(m,n) < \infty \quad\textrm{and}\quad \sum_{n \in B}\frac{1}{n} = \infty?
\end{equation}
We answer the question in the negative.
  
In fact, we examine \eqref{prop_B} for arbitrary sets $A$.  Clearly,
if $\mathbb{P} \subseteq A$, then \eqref{prop_B} holds for
$B = \mathbb{P}$. Similarly, if $n_0\mathcal{P} \subseteq A$ for some
$n_0 \in \mathbb{N}$ and $\mathcal{P} \subseteq \mathbb{P}$ such that
$\sum_{p \in \mathcal{P}} \frac{1}{p} = \infty$, then~(\ref{prop_B})
holds for $B=n_0 \calP$. We show that this construction is not only
sufficient but also necessary for the existence of $B\subset A$
satisfying \eqref{prop_B}:

\begin{theorem}\th\label{thm_charact}
  An arbitrary set of integers $A$ contains a subset $B$ satisfying
  \eqref{prop_B} if and only if there exists $n_0 \in \mathbb{N}$ and 
  $\mathcal{P} \subseteq \mathbb{P}$ such that $n_0\mathcal{P} \subseteq A$
  and $\sum_{p \in \mathcal{P}} \frac{1}{p} = \infty$.
\end{theorem}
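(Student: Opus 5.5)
The forward direction is already noted before the statement: if $n_0\calP\subset A$ with $\sum_{p\in\calP}1/p=\infty$, then $B=n_0\calP$ works, since $\gcd(n_0p,n_0p')=n_0$ for distinct primes $p,p'$. For the converse, my plan is a direct structural argument. Fix $B\subset A$ satisfying \eqref{prop_B} and let $G$ be the supremum of the pairwise gcd's, which is finite. The starting observation is that any integer $d>G$ divides at most one element of $B$, because if $d$ divided two distinct elements it would divide their gcd, which is at most $G$. In particular, each prime $p>G$ divides at most one element of $B$.

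\textbf{Step 1 (factorisation).} Write each $n\in B$ as $n=s(n)t(n)$. Here $s(n)$ is the $G$-smooth part of $n$ (all prime factors $\leq G$) and $t(n)$ is the $G$-rough part (all prime factors $>G$). By the observation above, the rough parts $t(n)$, $n\in B$, are pairwise coprime.

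\textbf{Step 2 (bounding the smooth parts).} If two distinct $n,n'\in B$ have the same smooth part $s$, then $s\mid\gcd(n,n')$, so $s\leq G$. Hence every smooth value $s>G$ is attained by at most one $n\in B$. The total contribution of those $n$ to $\sum_{n\in B}1/n$ is at most $\sum 1/s$ over all $G$-smooth $s$, which equals $\prod_{p\leq G}(1-1/p)^{-1}<\infty$. Since there are only finitely many $s\leq G$, divergence of $\sum_{n\in B}1/n$ forces some fixed $s_0\leq G$ with
\begin{equation*}
  \sum_{\substack{n\in B\\ s(n)=s_0}}\frac{1}{t(n)}=\infty .
\end{equation*}

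\textbf{Step 3 (bounding composite rough parts).} Among the $n$ with $s(n)=s_0$, the rough parts are pairwise coprime, and at most one of them equals $1$. Suppose $t=t(n)$ is composite, with smallest prime factor $q_t$. Then $t\geq q_t^2$, and by coprimality the primes $q_t$ are distinct for distinct such $t$. So the composite rough parts contribute at most $\sum_q q^{-2}<\infty$. Therefore
\begin{equation*}
  \calP:=\set{t(n): n\in B,\ s(n)=s_0,\ t(n)\in\PP}
\end{equation*}
satisfies $\sum_{p\in\calP}1/p=\infty$. Moreover $s_0\calP\subset B\subset A$, so we may take $n_0=s_0$.

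I expect the only real obstacle to be Step 2, namely ruling out divergence that is spread over infinitely many smooth parts. The key point is that a smooth part exceeding $G$ occurs only once, so those terms are dominated by the convergent sum of reciprocals of $G$-smooth numbers. Everything else is bookkeeping.
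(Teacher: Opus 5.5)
Your proof is correct and follows essentially the same route as the paper: split each $n\in B$ into its smooth and rough parts at the gcd bound, isolate one smooth part $n_0$ that carries a divergent reciprocal sum, and discard composite rough parts by injecting them into their smallest prime factors and using $\sum_p p^{-2}<\infty$. The only variation is your Step~2: the paper shows the set of smooth parts is finite (for each prime $p\leq M$, at most one $n\in B$ has $p^M$ dividing its smooth part), while you note that each smooth part exceeding $G$ occurs at most once and bound those terms by $\prod_{p\leq G}(1-1/p)^{-1}$; both arguments are valid.
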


Using \th\ref{thm_charact}, we answer~\th~\ref{ntproblem}
negatively in the strongest possible form, by showing that the
complement of $A$ can be essentially as sparse as the primes, while
$A$ does not contain a set $B$ satisfying~(\ref{prop_B}).

\begin{corollary}\th\label{neg_answer}
  For any monotonically increasing and doubling function
  $f: \mathbb{N} \to [0,\infty)$ with
  $\lim_{x \to \infty} f(x) = \infty$, there exists $A$ which does not
  have a subset $B$ satisfying \eqref{prop_B}, while
  \begin{equation}\label{eq:lowdense}
    \lim_{x \to \infty}\frac{\#\{n \leq x: n \in \NN\setminus A \}}{x f(x)/\log(x)} = 0.
  \end{equation}
  In particular, there exists such a set $A$ with asymptotic density
  $1$.
\end{corollary}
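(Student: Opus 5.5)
We argue via \th~\ref{thm_charact}: it suffices to build $A$ such that for \emph{every} $n_0\in\NN$ the set $\calP_{n_0}=\set{p\in\PP : n_0p\in A}$ has $\sum_{p\in\calP_{n_0}}1/p<\infty$. The simplest way to force this is to make each $\calP_{n_0}$ finite. We do this by removing from $\NN$ the set
\begin{equation*}
  C=\set{np : n\in\NN,\ p\in\PP,\ p>g(n)},
\end{equation*}
where $g:\NN\to\NN$ is an increasing function to be chosen, and we put $A=\NN\setminus C$. Then $n_0p\in A$ forces $p\leq g(n_0)$. Hence $\calP_{n_0}$ is finite for every $n_0$, and by \th~\ref{thm_charact} no subset $B\subset A$ satisfies~\eqref{prop_B}. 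What remains is to choose $g$ so that $C$ is sparse enough for~\eqref{eq:lowdense}.

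For $x\geq 1$ let $N(x)=\max\set{n : n\,g(n)\leq x}$ (with $N(x)=0$ if the set is empty). An element $np\leq x$ of $C$ has $p>g(n)$, hence $n\,g(n)<np\leq x$, so $n\leq N(x)$. We will impose $g(n)\geq n$. Then every such $n$ satisfies $n\leq\sqrt x$, so $\log(x/n)\geq\frac12\log x$. Chebyshev's bound $\pi(y)\ll y/\log y$ then gives
\begin{equation*}
  \#(C\cap[1,x])\leq\sum_{n\leq N(x)}\pi(x/n)\ll \frac{x}{\log x}\sum_{n\leq N(x)}\frac1n\ll\frac{x\log(N(x)+1)}{\log x}.
\end{equation*}
So~\eqref{eq:lowdense} reduces to showing $\log(N(x)+1)=o(f(x))$.

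This is the step where $g$ must be tuned to $f$, and it is the only real obstacle. Since $f$ is monotone and unbounded, we can choose $g(n)\geq n$ increasing and so large that
\begin{equation*}
  f(n\,g(n))\geq\bigl(\log(n+1)\bigr)^2 \quad\text{for all } n.
\end{equation*}
Take $x$ with $N(x)=N\geq1$. Then $x\geq N g(N)$, and monotonicity of $f$ gives $f(x)\geq(\log(N+1))^2$. Therefore
\begin{equation*}
  \frac{\log(N(x)+1)}{f(x)}\leq\frac{1}{\log(N(x)+1)}.
\end{equation*}
There are two cases as $x\to\infty$. If $N(x)\to\infty$, the right-hand side tends to $0$. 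If $N(x)$ stays bounded, the numerator is bounded while $f(x)\to\infty$, so the ratio again tends to $0$. Either way~\eqref{eq:lowdense} holds. The doubling hypothesis on $f$ is not needed in this argument; it would only be used if one wanted to compare $f(x/n)$ with $f(x)$, which we avoid.

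For the final assertion, apply the above with $f(x)=\log\log x$ (for large $x$, suitably adjusted at small $x$). Then $\#(C\cap[1,x])=o\bigl(x\log\log x/\log x\bigr)=o(x)$, so $A$ has asymptotic density $1$.
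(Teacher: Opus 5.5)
Your proof is correct and is essentially the paper's argument. Both delete from $\NN$ every product $cp$ of a prime $p$ with a cofactor $c$ that is small relative to $cp$: the paper does this on dyadic blocks, allowing $c\le\sqrt{f(2^m)}$ on $[2^m,2^{m+1})$, and you do it via the threshold $p>g(c)$. Both then bound the deleted set with Chebyshev's estimate and conclude with Theorem~\ref{thm_charact}, since each set $\{p\in\PP : n_0p\in A\}$ is finite. Your remark that the doubling hypothesis is unnecessary is also right: even in the paper's count, monotonicity alone gives $f(2^m)\le f(N)$ for $2^m\le N$.
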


\begin{proof}[Proof of Theorem \ref{thm_charact}]
  If there exists $n_0 \in \mathbb{N}$,
  $\mathcal{P} \subseteq \mathbb{P}$ with $n_0\mathcal{P} \subseteq A$
  and $\sum_{p \in \mathcal{P}} \frac{1}{p} = \infty$, then we pick
  $B = n_0\mathcal{P}$, and see that \eqref{prop_B} is satisfied. Thus
  it remains to show the converse. For this, let $A$ be fixed and let
  \eqref{prop_B} hold for some set $B \subseteq A$.  Denote
  \begin{equation}\label{def_M}
    M := \sup_{\substack{n,m \in B\\n \neq m}}\gcd(m,n).
  \end{equation}
  Then every $n\in B$ may be written as
  \begin{equation*}
    n = \underbrace{\parens*{\prod_{p \leq M} p^{\nu_p(n)}}}_{m_n}  \parens*{\prod_{p > M} p^{\nu_p(n)}},
  \end{equation*}
  and~(\ref{def_M}) implies that for every prime $p\leq M$, at most
  one member of $\set{m_n : n\in B}$ is divisible by $p^M$. Therefore,
  by the pigeonhole principle, the set $\set{m_n : n\in B}$ is finite,
  and it follows that it contains and element $n_0$ such that
  \begin{equation}\label{eq:Bprimediv}
    \sum_{n\in B'} \frac{1}{n} = \infty,
  \end{equation}
  where
  \begin{equation*}
    B' = \set*{n\in B : n=n_0  \prod_{p > M} p^{\nu_p(n)} \textrm{ and } n\neq n_0}.
  \end{equation*}
  Notice that for each prime $p>M$, there is at most one $n\in B'$
  such that $p\mid n$, by~(\ref{def_M}). Therefore,
  \begin{equation*}
    p_{\min}(n):=\min\set{p> M: p\mid n}
  \end{equation*}
  defines an injection $B'\hookrightarrow \PP\cap(M,\infty)$. Now
  \begin{equation*}
     \sum_{\substack{n\in B' \\ n/n_0 \textrm{ is composite}}} \frac{1}{n} \leq \frac{1}{n_0}\sum_{\substack{n\in B' \\ n/n_0 \textrm{ is composite}}} \frac{1}{p_{\min}(n)^2} \leq \frac{1}{n_0}\sum_{p\in\PP}\frac{1}{p^2} < \infty,
   \end{equation*}
   and it follows from~(\ref{eq:Bprimediv}) that
  \begin{equation*}
    \sum_{\substack{n\in B' \\ n/n_0 \textrm{ is prime}}} \frac{1}{n} = \infty. 
  \end{equation*}
  The theorem is proved by putting $\calP = \PP\cap \set{n/n_0 : n\in B'}$. 
\end{proof}

\begin{proof}[Proof of Corollary \ref{neg_answer}]
  Given $f$ as in the statement of the corollary, we let
  $f_m := f(2^m)$ and define
  \[S_m:= \{n \in [2^m,2^{m+1}): n = p\cdot c, p \in \mathbb{P}, 1
    \leq c \leq \sqrt{f_m}\}.\] By a weak form of the Prime Number
  Theorem, we get for $m$ sufficiently large,
  \[\#S_m \leq \sum_{1 \leq c \leq \sqrt{f_m}} \#\left(\mathbb{P} \cap
      [2^m/c,2^{m+1}/c)\right) \ll \frac{\sqrt{f_m}2^m}{m}.
    \]
    Setting now $S := \bigcup_{m \geq 1}S_m$, we get that for
    $2^m \leq N < 2^{m+1}$ that
    \[
      \#\{n \leq N: n \in S\} \ll \sum_{j \leq
        m}\frac{\sqrt{f_j}2^j}{j} \ll \frac{\sqrt{f_m}2^m}{m} \ll
      \frac{\sqrt{f(N)}N}{\log N},
    \]
    where the doubling assumption on $f$ is used in the last step. In
    particular,~(\ref{eq:lowdense}) holds by putting
    $A = \NN \setminus S$. It remains to show that $A$ does not
    contain a set $B$ satisfying \eqref{prop_B}. Assuming the
    contrary, an application of Theorem \ref{thm_charact} provides the
    existence of $n_0\in\NN$ and an infinite set $\calP\subset\PP$
    such that $n_0 \mathcal{P} \subseteq A$. However, since
    $f_m\to \infty\, (m\to\infty)$, we have for sufficiently large
    $p\in\PP$ that $n_0p \in S_m$ for some $m \in \mathbb{N}$. Thus
    $\mathcal{P}$ must be finite, contradicting Theorem
    \ref{thm_charact}.

    Finally, the ``in particular'' part of the corollary follows by
    choosing $f(x) := \log x$.
\end{proof}

\subsection*{Acknowledgments}

SMH and FAR thank Sam Chow for pointing out a historical inaccuracy in
the first version of this paper, and Manuel Hauke for generously
contributing the appendix.

\bibliographystyle{plain}

\end{document}